\newcommand{\witi}{\widetilde}
\newcommand{\fracd}[2]{\frac {\displaystyle #1}{\displaystyle #2 }}
\newcommand{\zz}{{\mathbb Z}}
\newcommand{\nn}{{\mathbb N}}
\newcommand{\RR}{{\mathbb R}}
\newcommand{\cald}{{\mathcal D}}
\newcommand{\veps}{\varepsilon}
\newcommand{\vxin}{\overrightarrow{\xi_{_N}}}
\newcommand{\xna}{X^{(N,1)}}
\newcommand{\xnb}{X^{(N,2)}}
\newcommand{\xnc}{X^{(N,3)}}
\newcommand{\xn}{X^{(N)}}
\newcommand{\tn}{T^{ (N)}}
\newcommand{\yn}{Y^{(N)}}
\newcommand{\pn}{P^{(N)}}
\newcommand{\qn}{Q^{(N)}}
\newcommand{\tyn}{{\witi Y}^{(N)}}
\newcommand{\tpn}{{\witi P}^{(N)}}
\newcommand{\beq}{\begin{eqnarray*}}
\newcommand{\feq}{\end{eqnarray*}}
\newcommand{\beqn}{\begin{eqnarray}}
\newcommand{\feqn}{\end{eqnarray}}
\newcommand{\bec}{\begin{claim}}
\newcommand{\fec}{\end{claim}}
\newcommand{\becn}{\begin{claim*}}
\newcommand{\fecn}{\end{claim*}}
\newtheorem{theorem}{Theorem}
\makeatletter \@addtoreset{theorem}{section}\makeatother
\newtheorem{assume}[theorem]{Assumption}
\newtheorem{assumep*}{Assumption~1'}
\newtheorem{proposition}[theorem]{Proposition}
\newtheorem{lemma}[theorem]{Lemma}
\newtheorem{corollary}[theorem]{Corollary}
\newtheorem{remark}[theorem]{Remark}
\title{Moran-type bounds for the fixation probability in a frequency-dependent Wright-Fisher model}
\author{Timothy Chumley\thanks{Department of Mathematics, Iowa State University, Ames, IA 50011, USA; e-mail: tchumley@iastate.edu}
\and Ozgur Aydogmus\thanks{Social Sciences University of Ankara, Department of Economics,  H\"{u}k\"{u}met Meydani No:2 Ulus, \qquad Ankara, Turkey; e-mail: ozgur.aydogmus@asbu.edu.tr}
        \and Anastasios Matzavinos\thanks{Division of Applied Mathematics, Brown University, Providence,
         RI 02912, USA, and\qquad \qquad \qquad \qquad
            Computational Science and Engineering Laboratory, ETH Z\"{u}rich, CH-8092, Z\"{u}rich, Switzerland; \qquad \qquad \quad e-mail: matzavinos@brown.edu} \and Alexander Roitershtein\thanks{Department of Mathematics, Iowa State University, Ames, IA 50011, USA; e-mail: roiterst@iastate.edu}
}
\date{}
\begin{document}
\maketitle
\begin{abstract}
We study stochastic evolutionary game dynamics in a population of finite size.
Individuals in the population are divided into two dynamically evolving groups.
The structure of the population is formally described by a Wright-Fisher type Markov chain with a frequency dependent fitness.
In a strong selection regime that favors one of the two groups, we obtain qualitatively matching lower and upper bounds
for the fixation probability of the advantageous population. In the infinite population limit we obtain an exact result showing that
a single advantageous mutant can invade an infinite population with a positive probability. We also give asymptotically sharp bounds for the
fixation time distribution.
\end{abstract}
\noindent {\it Keywords:} evolutionary game dynamics, stochastic dynamics, finite populations, strong selection.

\section{Introduction}
\label{intro}
Evolutionary game theory \cite{HS,MSPbook,nowak2006,econegt,weibull} is a mathematically accessible way of modeling the evolution of populations consisting of groups of individuals which perform different forms of behavior. It is commonly assumed within this theoretical framework that individuals reproduce or adopt their behavior according to their fitness, which depends on the population composition through a parameter representing utility of a random interaction within the population.
The fundamental interest of the theory is in understanding which forms of behavior have the ability to persist and which forms have a tendency to be driven out by others.
\par
In the language of game theory, behavior types are called strategies and the utility is identified with the expected payoff in an underlying game.
The basic biological interpretation is that each strategy is linked to a phenotype in the population and more successful types of behavior have higher reproductive fitness. In applications to the evolution of social or economic behavior, the propagation of strategies can be explained by an interplay between cultural inheritance, learning, and imitation \cite{structure_popula,econegt,PNAS_imitation,PNAS_imitation1}.
\par
In the case of finite populations, the evolutionary dynamics is typically modeled by a discrete-time Markov process such as Moran and Wright-Fisher processes  \cite{fudenbergss,nowakWF,sabin,nowakmoran,traulsen}. In this paper we focus on the evolutionary game dynamics of the Wright-Fisher process introduced by Imhof and Nowak in \cite{nowakWF}. In this discrete time model, there are two competing types of individuals in a population of fixed size whose fitness depends, up to a certain selection parameter, on the composition of the population (type frequencies). During each generation every individual is replaced by an offspring whose type is determined at random, independently of others, based on the fitness profile of the population. The resulting model is a discrete-time Markov chain whose states represent the number of individuals of one of the two types present in the current generation.
\par
The Markov chain has two absorbing states corresponding to the situation where one of the two types becomes extinct and the other invades the population. The study of the probability of fixation in an absorption state representing a homogeneous population thus becomes a primary focus of the theory \cite{measuress,sabin,formal_properties,evol_fixation,wahl}. Following common jargon in the literature, we will occasionally refer to the evolution of the Markov process until fixation as the {\it invasion dynamics} of the model.
\par
Formally, the Wright-Fisher process introduced in \cite{nowakWF} can be seen as a variation of the classical Wright-Fisher model for genetic drift \cite{dgbook,stflourgene,ewens,gale} with a frequency-dependent selection mechanism. Throughout the paper we are concerned with the generic case of the selection that systematically favors one of the two population types. We thus will impose a condition ensuring that the {\it local drift} of the Markov chain (i.\,e., the expected value of the jump conditioned on the current state of the Markov chain) is strictly positive at any state. Since the fitness in Imhof and Nowak's model is determined by the payoff matrix of a game, this condition turns out to be essentially equivalent to the assumption that one of the strategies in the
underlying $2\times 2$ game is dominant.
\par
The main goal of this paper is to derive similar upper and lower bounds for the fixation probability of the model.
The bounds become sharp in the limit of the infinite population, but even for a fixed population size they have
similar mathematical form and thus capture adequately the invasion dynamics of the model. The core of the paper is an exploration of Moran's method \cite{mbounds} and its ramifications in our framework.
\par
The rest of the paper is organized as follows. The underlying model is formally introduced in Section~\ref{model}. Our results are stated and discussed in Section~\ref{results}. Conclusions are outlined in Section \ref{conclusion}. The proofs are deferred to Section~\ref{proofs}. Finally, an analogue of our main result for a  related frequency-dependent Moran model, which has been introduced in \cite{nature04a,nowakmoran}, is briefly discussed in the Appendix.
\section{Mathematical model}
\label{model}
The Wright-Fisher Markov process introduced in \cite{nowakWF}, which we now present, describes the evolution of two competing types of individuals in a population of fixed size $N$ evolving in discrete non-overlapping generations. Individuals of the first type follow a so-called strategy $A$ and those of the second type follow a so-called strategy $B$.  The underlying $2\times 2$ symmetric game is described by the payoff matrix
\beq
\begin{tabular}{ l | c  r }
     & A & B \\ \hline
    A & a & b \\
    B & c & d \\
  \end{tabular}
\feq
where $a,b,c,d$ are given positive constants. The matrix entries $a,b,c,d$ represent the utility of an interaction of (the individual of type) $A$ with $A,$ $A$ with $B,$ $B$ with $A,$ and $B$ with $B,$ respectively, for the first named individual in the pair.
\par
We denote by $\xn_t$ the number of individuals following strategy $A$ in the generation $t \in \zz_+.$ Here and henceforth $\zz_+$ stands for the set of nonnegative integers $\nn\cup\{0\}.$
With each of the two strategies is associated a {\it fitness}, which, when $\xn_t=i$, is given respectively by
\beq
f_{_N}(i)=1-w+w\pi_{_A}(i,N)\quad \mbox{and}\quad g_{_N}(i)=1-w+w\pi_{_B}(i,N),
\feq
where $w\in[0,1]$ is the so called {\it selection parameter}, while
\beq
\pi_{_A}(i,N)=\frac{a(i-1)+b(N-i)}{N-1}
\quad \mbox{and}\quad
\pi_{_B}(i,N)=\frac{ci+d(N-i-1)}{N-1}
\feq
are expected payoffs in a single game with a randomly chosen, excluding self-interaction, member of the population. The selection parameter $w$ is a proxy for modeling the strength of the effect of interactions (governed by the payoff matrix) on the evolution of the population compared to inheritance.
\par
Given that $\xn_t=i$, the number of individuals in the next generation adopting strategy $A$ is described by $N$ independent Bernoulli trials with success probability given by
\beqn
\label{xin}
\xi_{_N}(i)=\frac{if_{_N}(i)}{if_{_N}(i)+(N-i)g_{_N}(i)}.
\feqn
Thus, conditionally on $\xn_t,$ the next generation $\xn_{t+1}$ is a binomial random variable \\ $BIN\bigl(N,\xi_{_N}(\xn_t)\bigr)$ centered around $N\xi_{_N}(\xn_t):$
\beqn
\label{wf}
P\bigl(\xn_{t+1}=j|\xn_t=i\bigr)={N\choose j}\bigl(\xi_{_N}(i)\bigr)^j\bigl(1-\xi_{_N}(i)\bigr)^{N-j}
\feqn
for all $0\leq i,j\leq N.$ The Wright-Fisher Markov chain with transition kernel in the form \eqref{wf} comprises a class of classical models of population genetics \cite{dgbook,stflourgene,ewens,gale}. Under the assumptions on the underlying game stated below in this section, the most mathematically related case is a model of genetic drift in a diploid population with favorable selection and without mutation, which formally corresponds to choosing a payoff matrix with $a=b>1$ and $c=d=1.$
\par
Note that the Markov chain has two absorbing states, $0$ and $N$, which correspond to the extinction of individuals using one of the two strategies. Our primary objective in this paper focuses on the estimation of the following fixation (absorption at $N,$ or yet alternatively, invasion) probability:
\beqn
\label{pni}
p_{_N}(i)=P\bigl(\xn_T=N|\xn_0=i\bigr),
\feqn
where
\beqn
\label{ti}
T=\min\bigl\{t\in\nn:\xn_t=0~\mbox{or}~\xn_t=N\bigr\}.
\feqn
Throughout the paper we are interested in the dependence of $p_{_N}(i)$ on $i$ and $N$ while the parameters
$a,b,c,d$ and $w>0$ are maintained fixed. We make the following standing assumption.
For an integer $N\geq 2,$ let $\Omega_{_N}$ and $\Omega_{_N}^o$ denote the state space and the set of transient (non-absorbing) states of $\xn,$ respectively. That is,
\beqn
\label{omegan}
\Omega_{_N}=\{0,1,\ldots,N\}\qquad \mbox{and}\qquad \Omega_{_N}^o=\{1,\ldots,N-1\}.
\feqn
\begin{assume}
\label{assume1}
There exist an integer $N_0\geq 2$ and real constants $\alpha$ and $\gamma$ such that
\beq
0<\alpha\leq\frac{g_{_N}(i)}{f_{_N}(i)}\leq \gamma<1 \quad \mbox{for all}\quad N\geq N_0~\mbox{and}~i\in\Omega_{_N}^o.
\feq
\end{assume}
A connection between Assumption~\ref{assume1} and the structure of the local drift of the Markov chain $\xn$, which is crucially important for our agenda, is discussed
in Section~\ref{finite}. In the rest of this section we focus on more technical aspects and immediate implications of this assumption.
\par
Note that for a fixed $N\in\nn,$ both $f_{_N}(i)$ and $g_{_N}(i)$ are linear functions (first-order polynomials) of $i,$ and hence
their graphs are straight segments. Thus the above assumption implies that
\begin{align}
\nonumber
\label{edgeu}
&
a=\lim_{N\to\infty}f_{_N}(N-1)=\lim_{N\to\infty}f_{_N}(N)>
\\
&
\qquad
\qquad
\qquad
>b=\lim_{N\to\infty}g_{_N}(N-1)=\lim_{N\to\infty}g_{_N}(N)
\end{align}
and
\beqn
c=\label{edgel}
\lim_{N\to\infty}f_{_N}(1)=\lim_{N\to\infty}f_{_N}(0)>d=\lim_{N\to\infty}g_{_N}(1)=\lim_{N\to\infty}g_{_N}(0).
\feqn
 Hence $A$ is the strictly dominant strategy in the underlying game.
In particular, both players implementing strategy $A$ is the unique Nash equilibrium of the game and also the unique evolutionary stable strategy \cite{HS,weibull}.
\par
It has been pointed out in \cite{nowakmoran} (for a related frequency-dependent Moran process which we discuss in the Appendix) and \cite{nowakWF} (for the Wright-Fisher process considered in this paper) that, to a large extent, the invasion dynamics of the model for a given population size $N$ can be characterized by the behavior of the sign of the function $h_{_N}(i)=f_{_N}(i)-g_{_N}(i),$ and that the latter is entirely determined in the whole range $i\in  \Omega_{_N}^o$ by the pair $\bigl\{\mbox{sign}\bigl(h_{_N}(1)\bigr),\mbox{sign}\{h_{_N}\bigl(N-1)\bigr)\bigr\}.$ Our Assumption~\ref{assume1} can be thought of as a uniform over $N$ variation of the $A$-dominance condition $h_{_N}(0)>0$ and $h_{_N}(N)>0$ introduced in \cite{nature04a,nowakmoran} ($\xi'>0$ and $\zeta'<0$ in their notation). In fact, using the linear structure of $f_{_N}(i)$ and $g_{_N}(i),$ it is straightforward to verify that Assumption~\ref{assume1} is equivalent to the following one ($w>0$ along with $\xi>0$ and $\zeta<0$ in notation of \cite{nowakmoran}):
\begin{assumep*}
\label{assume1p}
$w>0,$ $a>b,$ $c>d.$
\end{assumep*}
Furthermore, if the hypotheses of Assumption~1' are satisfied, one can set
\beq
N_0=\min\bigl\{N\geq 2:a(N-1)>cN-d~\,\mbox{and}~\,b(N-1)>d(N-2)+c\bigr\},
\feq
and
\beqn
\label{alphagamma}
\begin{split}
\alpha&=\min\Bigl\{\frac{g_{_{N_0}}(1)}{f_{_{N_0}}(1)},\frac{g_{_{N_0}}(N_0-1)}{f_{_{N_0}}(N_0-1)},\lim_{N\to\infty}\frac{g_{_N}(1)}{f_{_N}(1)},
\lim_{N\to\infty}\frac{g_{_N}(N-1)}{f_{_N}(N-1)}\Bigr\},
\\
\gamma&=\max\Bigl\{\frac{g_{_{N_0}}(1)}{f_{_{N_0}}(1)},\frac{g_{_{N_0}}(N_0-1)}{f_{_{N_0}}(N_0-1)},\lim_{N\to\infty}\frac{g_{_N}(1)}{f_{_N}(1)},
\lim_{N\to\infty}\frac{g_{_N}(N-1)}{f_{_N}(N-1)}\Bigr\},
\end{split}
\feqn
in order to obtain $\alpha,\gamma$ and $N_0$ postulated by Assumption~\ref{assume1} explicitly, in terms of the basic data $a,b,c,d$ and $w.$
\par
A canonical example of a game satisfying Assumption~\ref{assume1} is the prisoner's dilemma where $b>d>a>c.$ Examples of games with $b>a>c>d$ ({\it Time's sales versus Newsweek's sales} and {\it cheetahs and antelopes}, respectively) are considered in \cite[Section~3.2]{timevsnewsweek} and \cite[Section~3.2]{gtalive}.
Another version of the {\it cheetahs and antelopes} discussed in Section~3.2 of \cite{gtalive} provides an example of a game where $b>c>a>d.$  The invasion dynamics for two examples of a frequency-dependent Moran process with an underlying game such that  $a>c>b>d$ is analyzed in \cite{nowakmoran}.
\section{Results and discussion}
\label{results}
The organization of this section is as follows. The section is divided into four subsections, the first two contain a preliminary discussion and the other two present our main results. Section~\ref{prelim} aims to provide a brief background and a suitable general context for our approach and results. It also contains a summary of our main results. Section~\ref{neutral} discusses relevant results of \cite{nowakWF} from the perspective outlined in Section~\ref{prelim}. In Section~\ref{finite} we state our results for a fixed finite population size $N,$ and in Section~\ref{branching} we discuss their asymptotic counterparts in the infinite population limit and some implications.
\subsection{Preliminary discussion}
\label{prelim}
In \cite{nowakWF} it is shown, among other results, that under Assumption~\ref{assume1} the selection in the Wright-Fisher process
favors $A$ in that $p_{_N}(i) > i/N$ for all $i\in\Omega_{_N}^o.$ Our goal is to obtain a further insight into the behavior of the fixation probability $p_{_N}(i)$ as a function of $i$ and $N.$
Our main contribution can be partially summarized as follows:
\begin{theorem}
[Main results in a nutshell]
\label{main1a}
Let Assumption~\ref{assume1} hold. Then, for the Wright-Fisher process defined in \eqref{xin} and \eqref{wf},
there exist constants $\rho, \theta \in (0,1)$ such that
\beqn
\label{result}
\frac{1-\rho^i}{1-\rho^N}\leq p_{_N}(i)\leq \frac{1-\theta^i}{1-\theta^N},\qquad N\geq N_0,\,i\in \Omega_{_N}^o.
\feqn
Furthermore, there exists a constant $q\in [\rho,\theta]$ such that
\beqn
\label{result1}
\lim_{N \to \infty}p_{_N}(i)=1-q^i,\qquad \forall~i\in\nn.
\feqn
\end{theorem}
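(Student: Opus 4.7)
My plan is to prove the two halves of the theorem by complementary methods: an optional-stopping argument with Moran-type geometric test functions for the finite-$N$ bounds in \eqref{result}, and a supercritical branching-process approximation for the infinite-population limit in \eqref{result1}.

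For the bounds, I would introduce the one-parameter family of test functions
$$
\phi_r(i)=\frac{1-r^i}{1-r^N},\qquad r\in(0,1),\ i\in\Omega_{_N},
$$
and seek constants $\rho,\theta\in(0,1)$ such that $\phi_\rho(\xn_t)$ is a submartingale and $\phi_\theta(\xn_t)$ is a supermartingale for $\xn$. Since $\phi_r(0)=0$ and $\phi_r(N)=1$ for every $r$, optional sampling at the \as\ finite stopping time $T$ would then yield $\phi_\rho(i)\leq p_{_N}(i)\leq\phi_\theta(i)$. The binomial moment generating function gives
$$
\ee\bigl[r^{\xn_{t+1}}\,\bigm|\,\xn_t=i\bigr]=\bigl(1-\xi_{_N}(i)(1-r)\bigr)^N,
$$
so the submartingale and supermartingale conditions collapse to the pointwise scalar inequalities
$$
\bigl(1-\xi_{_N}(i)(1-\rho)\bigr)^N\leq\rho^i\quad\mbox{and}\quad\bigl(1-\xi_{_N}(i)(1-\theta)\bigr)^N\geq\theta^i,
$$
required to hold uniformly in $i\in\Omega_{_N}^o$ and $N\geq N_0$. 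Writing $\xi_{_N}(i)=i/(i+(N-i)r_{_N}(i))$ with $r_{_N}(i)=g_{_N}(i)/f_{_N}(i)\in[\alpha,\gamma]$ by Assumption~\ref{assume1}, the worst case for the submartingale condition is $r_{_N}(i)=\gamma$ and for the supermartingale condition $r_{_N}(i)=\alpha$; in each case the inequality becomes a one-parameter statement in $u=i/N\in[0,1]$ holding with equality at both endpoints. An endpoint-derivative calculation at $u=0$ then reduces the two conditions to $\log(1/\rho)\leq(1-\rho)/\gamma$ and $\log(1/\theta)\geq(1-\theta)/\alpha$ respectively, each of which admits a non-empty range of solutions in $(0,1)$.

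For the limit \eqref{result1}, I would use a coupling with a Galton-Watson branching process. For large $N$ and fixed $i$, the number of offspring contributed by each type-$A$ parent in one Wright-Fisher step is $\mbox{Bin}\bigl(N,\,f_{_N}(i)/(if_{_N}(i)+(N-i)g_{_N}(i))\bigr)$, which converges in distribution to $\mbox{Poisson}(c/d)$ in the notation of \eqref{edgel}, and the offspring counts from distinct $A$-parents are asymptotically independent. Hence $\xn$ can be coupled to a supercritical Galton-Watson process $(Z_t)$ with $\mbox{Poisson}(c/d)$ offspring law, the coupling being exact as long as $Z_t$ is small compared to $N$. The survival probability of $(Z_t)$ from $i$ individuals equals $1-q^i$, where $q\in(0,1)$ is the unique nontrivial fixed point of $s=e^{-(c/d)(1-s)}$; supercriticality together with standard law-of-large-numbers arguments for $\xn/N$ away from the boundary shows that once $\xn$ escapes a small neighbourhood of $0$ it fixates at $N$ with probability tending to $1$. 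Combined with the bounds \eqref{result} sandwiching the limit between $1-\rho^i$ and $1-\theta^i$, this delivers $\lim_{N\to\infty}p_{_N}(i)=1-q^i$ with $q$ lying between $\theta$ and $\rho$.

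The principal obstacle will be the scalar inequality $(1-\xi_{_N}(i)(1-r))^N\lessgtr r^i$: the naive bound $\log(1-x)\leq-x$ is too lossy to produce any non-trivial $r$, so the verification must exploit the exact binomial moment generating function together with the linear-in-$i$ form of $f_{_N}$ and $g_{_N}$ and the uniform bounds from Assumption~\ref{assume1}. A secondary difficulty is making the branching-process coupling quantitative enough to upgrade the soft estimate $1-\rho^i\leq\liminf_N p_{_N}(i)\leq\limsup_N p_{_N}(i)\leq 1-\theta^i$ to the sharp limit $1-q^i$.
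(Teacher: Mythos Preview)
Your plan is essentially the paper's own: geometric test functions $r^{\xn_t}$ plus optional stopping for the bounds \eqref{result}, and a supercritical Poisson branching-process approximation for the limit \eqref{result1}. Two points deserve tightening. First, the endpoint-derivative condition at $u=0$ is only \emph{necessary}: both $u\mapsto u/(u+(1-u)\gamma)$ and $u\mapsto(1-\rho^u)/(1-\rho)$ are concave on $[0,1]$, so matching derivatives at $0$ does not by itself force the required sign of their difference throughout the interval. The paper does not argue this step either but invokes Moran's original analysis, which supplies the explicit feasible pair $\rho=e^{-2(1-\gamma)}$ and $\theta=e^{-2(1-\alpha)/\alpha}$; you will need to reproduce or cite that verification rather than rely on the derivative heuristic. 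Second, the Poisson offspring mean is not the payoff ratio $c/d$ but
\[
\lambda=\lim_{N\to\infty}\frac{f_{_N}(i)}{g_{_N}(i)}=\frac{1-w+wb}{1-w+wd}
\]
for any fixed $i$ (the labels in \eqref{edgeu}--\eqref{edgel} are misprints). With that correction your coupling sketch matches the paper's proof of Theorem~\ref{newt}: weak convergence of the transition kernel to the Poisson($\lambda i$) kernel, followed by an $\veps/3$ interchange-of-limits argument in which the a~priori lower bound from \eqref{result} is used to show that late absorption at $0$ is uniformly unlikely.
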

The first part of the above theorem is the content of Theorem~\ref{main1}, and the second one in a more detailed form is stated in Theorem~\ref{newt}.
Notice that the limit in \eqref{result1} has the same form $1-c^i$ for some $c\in(0,1)$ as the asymptotic of the lower and upper bounds in \eqref{result}.
In intuitive accordance with the fact that $A$ is the unique Nash equilibrium and evolutionary stable strategy in the underlying game,
\eqref{result1} implies that a single advantageous mutant has a non-zero probability of invasion even at the infinite population limit.
\par
The Wright-Fisher Markov chains with directional selection (i.\,e. favoring one of the two population types), either the classical one with constant selection
bias $\frac{g_{_N}(i)}{g_{_N}(i)}$ or the more general one with the frequency dependent selection mechanism introduced in \cite{nowakWF}, are known to exhibit complex multi-scale dynamics. We refer to \cite[Section~6.3.1]{dgbook}, \cite[Section~1.4.3]{ewens}, \cite[Section~3]{discrete}, and \cite{scale,scale1} for a description of several possible scaling schemes and limiting procedures for these models. Due to the high likelihood of big jumps and space-wise inhomogeneity of the local drift, obtaining a quantitative insight into a mechanism of transforming a relatively simple structure of the transition kernel into the global dynamics of these Markov chains turns out to be a challenging task. The usual approach to overcome the difficulty is to implement a small parameter expansion which,  although often formally not limited to any particular drift structure, ultimately leads to the (conceptually undesirable) comparison of the model to stochastic processes without drift.
\par
When $w$ is small---a regime commonly referred to as \textit{weak selection}---the fitnesses of individuals play little role on the dynamics of evolution, and rather it is simply the proportions of the previous generation which play the primary role in determining the next generation.
\par
The main content of our work is an alternative method, which is in essence equivalent to an indirect coupling of the Imhof-Nowak model with an ``exponential submartingale". In particular, this method allows us to considerably improve the previous work and obtain
qualitatively matching lower and upper bounds for the fixation (absorption) probabilities in this model.
\par
Using explicit formulas available for the Moran chain, it can be verified that if \\ $\frac{g_{_N}(i)}{f_{_N}(i)}=q\in (0,1)$ for all $N\in\nn$ and
$i\in\Omega_{_N}^o(i),$ then
\beqn
\label{exp}
p_{_N}(i)=\frac{1-q^{i}}{1-q^{N}}.
\feqn
This example formally corresponds to the payoff matrix $a=b=\frac{1}{w}\frac{1-q}{q}$ and $c=d=1.$
The expression for the absorbtion probability in the form $p_{_N}(i)=\frac{1-q^i}{1-q^N}$ is universal for finite-state Markov chains $X_t$ with
a space homogenous transition kernel and positive average drift for which a constant $q\in (0,1)$ can be found such that $M_t=q^{X_t}$ is a martingale.
Fixation probabilities for the diffusion approximation of the classical Wright-Fisher model with selection are in the form
$p(x)=\frac{1-\rho^x}{1-\rho},$ which is similar. This form of the fixation probability is believed to be universal for a large class of  evolutionary models in structured populations \cite{nowakfix}. Results similar to \eqref{exp} for the Wright-Fisher process were obtained by Kimura via diffusion approximations (see also \cite{ewens}). However, these results are only valid for large population sizes and Markov chains that move by small steps, i.e. $w=1-r$ is sufficiently small.
\par
The proof of the bounds for a given population size relies on identifying ``exponential sub- and super-martingales" dominating the process (we remark, for instance, that for a linear Brownian motion $B_t+\mu t,$ $\mu>0,$ the proper choice is $\rho=e^{-2\mu}$ by virtue of Theorem~8.5.6 in \cite{durrett}).
\subsection{Comparison to the neutral Wright-Fisher process}
\label{neutral}
In the regime of neutral selection where $w=0$, $\xn_t$ is a martingale, and the precise computation $p_{_N}(i) = i/N$ readily follows.
When $w > 0$ the situation is more difficult due to the space inhomogenity of the transition kernel combined with the large amplitude of one-step fluctuations.
The latter is compared here to the nearest-neighbor transitions of a Moran process, for instance the companion Moran model introduced in \cite{nature04a,nowakmoran}.
By using a comparison with the neutral selection case, it is shown in \cite{nowakWF} that if Assumption~\ref{assume1} is satisfied, then
\beqn
\label{iovern}
p_{_N}(i)> i/N,\qquad \forall~N\geq N_0,\,i\in\Omega_{_N}^o.
\feqn
This result holds for any $w>0$ and is an instance of the following general principle.
\begin{proposition}
\label{prop}
Let $X=(X_t)_{t\in \zz_+}$ be a Markov chain on $\Omega_{_N}$ for some $N\geq 2.$ For $i\in\Omega_{_N},$ let
\beqn
\label{ldrift}
\mu(i)=E(X_{t+1}-X_t|X_t=i)
\feqn
denote the local drift of $X$ at site $i.$ Suppose that:
\begin{enumerate}
\item $0$ and $N$ are absorbing states.
\item If $i\in\Omega_{_N}^o$ and $j\in\Omega_{_N},$ then $P(X_m=j|X_0=i)>0$ for some $m\in\nn.$
\item $\mu(i)\geq 0$ for any $i\in\Omega_{_N}^o.$
\end{enumerate}
Let $p_{_N}(i)=P(X~\mbox{\rm absorbs at}~N|X_0=i).$ Then $p_{_N}(i)\geq i/N$ for any $i\in\Omega_{_N}^o.$ Furthermore, the
inequality is strict if and only if $\mu(i)>0$ for at least one site $i\in\Omega_{_N}^o.$
\end{proposition}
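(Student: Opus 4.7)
The proof rests on the observation that hypothesis 3 makes $X$ into a nonnegative submartingale bounded by $N$, and the conclusion should follow from optional stopping at the absorption time $T$.

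The plan is to first verify that $E[T\mid X_0=i]<\infty$ for every $i\in\Omega_{_N}^o$. This is standard: by hypothesis 2 together with the finiteness of the state space, there exist an integer $m_0\in\nn$ and a constant $\delta>0$ such that
\[
P(T\leq m_0\mid X_0=i)\geq \delta\qquad\forall\,i\in\Omega_{_N}^o,
\]
and iterating the Markov property yields a geometric tail bound on $T$, hence $E[T\mid X_0=i]<\infty$.

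Next, I would write the telescoping identity
\[
X_{T\wedge t}=X_0+\sum_{s=0}^{T\wedge t-1}(X_{s+1}-X_s),\qquad t\in\nn,
\]
and take expectation conditional on $X_0=i$. Using the definition of $\mu$ in \eqref{ldrift} and the strong Markov property at the stopping times $s<T$, this gives
\[
E\bigl[X_{T\wedge t}\,\big|\,X_0=i\bigr]=i+E\Bigl[\sum_{s=0}^{T\wedge t-1}\mu(X_s)\,\Big|\,X_0=i\Bigr].
\]
Since $0\leq X_{T\wedge t}\leq N$, bounded convergence yields $E[X_{T\wedge t}\mid X_0=i]\to E[X_T\mid X_0=i]=N\,p_{_N}(i)$ as $t\to\infty$. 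Hypothesis 3 makes the summands on the right nonnegative, so monotone convergence gives
\[
N\,p_{_N}(i)=i+E\Bigl[\sum_{s=0}^{T-1}\mu(X_s)\,\Big|\,X_0=i\Bigr],
\]
which immediately proves $p_{_N}(i)\geq i/N$.

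For the dichotomy, if $\mu\equiv 0$ on $\Omega_{_N}^o$ then the right-hand side collapses and equality holds. Conversely, if there exists $j\in\Omega_{_N}^o$ with $\mu(j)>0$, then hypothesis 2 produces some $m\in\nn$ with $P(X_m=j\mid X_0=i)>0$; because $0$ and $N$ are absorbing and $j$ is transient, the event $\{X_m=j\}$ forces $T>m$, so
\[
E\Bigl[\sum_{s=0}^{T-1}\mu(X_s)\,\Big|\,X_0=i\Bigr]\geq \mu(j)\,P(X_m=j\mid X_0=i)>0,
\]
and strict inequality follows. The main technical point to be careful about is the exchange of limits justifying the telescoping identity after stopping at $T$, but once the finite-expectation and boundedness observations above are in place this is routine.
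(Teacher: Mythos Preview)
Your proof is correct and follows essentially the same approach as the paper, which merely records that $X$ is a bounded submartingale and invokes the optional stopping theorem to obtain $E(X_T\mid X_0)=Np_{_N}(X_0)\geq X_0$. Your argument is in fact more complete: the paper's one-line sketch does not address the strict-inequality dichotomy, whereas your use of the explicit identity $Np_{_N}(i)=i+E\bigl[\sum_{s=0}^{T-1}\mu(X_s)\,\big|\,X_0=i\bigr]$ (which the paper displays separately as motivation in Section~\ref{finite}) handles it cleanly.
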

Note that under Assumption~\ref{assume1}, we have the following relation for the local drift of the Wright-Fisher Markov chain $\xn$
\begin{align}
\nonumber
\label{drift}
\mu_{_N}(i)&:=E\bigl(\xn_{t+1}-\xn_t\bigl|\xn_t=i\bigr)=N\xi_{_N}(i)-i
\\
&
=\frac{i(N-i)\bigl(f_{_N}(i)-g_{_N}(i)\bigr)}{if_{_N}(i)+(N-i)g_{_N}(i)}> 0,\qquad \qquad \forall~i\in\Omega_{_N}^o.
\end{align}
The proof of the proposition is in essence the observation that $X$ is a bounded submartingale, and hence
$E(X_T|X_0)=Np_{_N}(X_0)\geq X_0$ by the optional stopping theorem \cite[Theorem~5.7.5]{durrett}.
\par
In the weak selection regime, \cite{nowakWF} provides a nearly complete analysis of $p_{_N}(i)$ and in particular obtains
a version of the so-called one-third law of evolutionary dynamics for the model. The results of \cite{nowakWF} for the fixation probability under weak selection are further refined and extended in \cite{sabin}. In particular, \cite{sabin} derives a second order correction term to $i/N$ for the fixation probability $p_{_N}(i).$ In this paper, we concentrate on the case of directional (beneficial for type $A$) selection postulated in Assumption~\ref{assume1}, but we do not make the assumption of weak selection.
\par
We conclude this subsection with an interpretation of the drift $\mu_{_N}(i)$ which will not be used in the rest of the paper, but we believe
it is of interest on its own. For $i\in \Omega_{_N}$ and $j=1,\ldots,N,$ let
\beq
S_{N,i}(j)=
\left\{
\begin{array}{lll}
1&\mbox{if}&j\leq i\\
0&\mbox{if}&j> i
\end{array}
\right.
\quad
\mbox{\rm and}
\quad
F_{N,i}(j)=
\left\{
\begin{array}{lll}
f_{_N}(i)&\mbox{if}&j\leq i\\
g_{_N}(i)&\mbox{if}&j>i.
\end{array}
\right.
\feq
Thus, if one enumerates and in addition also labels the individuals at the state $\xn_t=i$ in such a way that the first $i$ individuals are of type $A$ and get label $1,$ and the remaining $N-i$ individuals are of type $B$ and get label $0,$ then $S_{N,i}(j)$ and $F_{N,i}(j)$ represent, respectively,
the label and the fitness of the $j$-th individual. Further, using the above enumeration, let $(u,v),$ $u<v,$ be a pair of the individuals chosen at random. That is,
\beq
P(u=j,v=k)=\frac{2}{N(N-1)}\quad \mbox{for any}\quad j,k\in \Omega_{_N}\backslash\{0\},\, j<k.
\feq
Let
\beq
H_{N,i}:=\frac{2i(N-i)}{N(N-1)}=E\bigl(S_{N,i}(v)-S_{N,i}(u)\bigr)
\feq
be the {\it heterozygosity} \cite[Section ~1.2]{dgbook} of the Wright-Fisher process $\xn$ at state $i\in\Omega_{_N}^o,$ that is the probability that two individuals randomly chosen from the population when $\xn_t=i$ have different types. In this notation,
\beq
\mu_{_N}(i)&=&\frac{N(N-1)}{2}H_{_N}(i)\frac{f_{_N}(i)-g_{_N}(i)}{if_{_N}(i)+(N-i)g_{_N}(i)}
\\
&=&
\frac{N(N-1)}{2}\cdot \frac{E\bigl(F_{N,i}(v)-F_{N,i}(u)\bigr)}{\sum_{j=1}^N F_{N,i}(j)}
\\
&=&
\frac{1}{2}\frac{\sum_{j,k=1}^N \bigl|F_{N,i}(k)-F_{N,i}(j)\bigr|}{\sum_{j=1}^N F_{N,i}(j)},
\feq
suggesting that the drift $\mu_{_N}(i)$ can serve as a measure of heterozygosity suitable for our game-theoretic framework.
\subsection{Moran's bounds and a coupling with the classical Wright-Fisher chain}
\label{finite}
Intuitively, it is clear that in the framework of Proposition~\ref{prop}, the local drift $\mu(i)$ defined in \eqref{ldrift} is a characteristic of the Markov chain $X_t$ which is intimately related to the value of the fixation probabilities. Notice, for instance, that if $X_0=i$ and $T$ is the absorbtion time of the Markov chain $X,$
then
\beq
\sum_{t=0}^\infty E\bigl(\mu_{_N}(X_t)\bigr)=E(X_T-X_0)=Np_{_N}(i)-i.
\feq
The general heuristic assertion of a close association between the local drift and the fixation probabilities is especially
evident in the particular instance of the Wright-Fisher process, since according to \eqref{drift},
\beqn
\label{xi-mu}
\xi_{_N}(i)=\frac{i}{N}+\frac{1}{N}\mu_{_N}(i),
\feqn
and, by virtue of \eqref{wf}, $\bigl(\xi_{_N}(i)\bigr)_{i\in \Omega_{_N}^o}$ is the sequence defining the dymamics of the model. Thus, in view of the inequality in \eqref{drift}, in order to study the shape of $p_{_N}(i)$ as a function of $i$ it may be conceptually desirable to compare $\xn$ with a suitable stochastic process with positive drift, for which the solution to the gambler's ruin problem is explicitly known.
\par
It has been emphasized in the work of \cite{nowakWF,nowakmoran} that the fitness difference $h_{_N}(i)=f_{_N}(i)-g_{_N}(i),$ and in particular its sign,
is a major factor influencing the invasion dynamics of the model. Note that by virtue of \eqref{drift}, the sign of $h_{_N}(i)$ coincides with the sign of the local drift $\mu_{_N}(i).$ Moreover, \eqref{drift} can be rewritten as
\beqn
\label{cdrift}
\mu_{_N}(i)=\frac{i(N-i)\bigl(1-\frac{g_{_N}(i)}{f_{_N}(i)}\bigr)}{i+(N-i)\frac{g_{_N}(i)}{f_{_N}(i)}}, \qquad \forall~i\in\Omega_{_N}^o,
\feqn
showing that the value of  $\mu_{_N}(i)$ is in fact determined by the ratio $g_{_N}(i)\big\slash f_{_N}(i).$ From this perspective, Assumption~\ref{assume1} together with \eqref{alphagamma} can be thought of as a tool establishing lower and upper bounds for the drift in terms of the selection parameter $w$ and payoff matrix of the underlying game. In fact, \eqref{cdrift} yields
\beqn
\label{mdrift}
\frac{i(N-i)(1-\gamma)}{i+(N-i)\gamma} \leq \mu_{_N}(i)\leq \frac{i(N-i)(1-\alpha)}{i+(N-i)\alpha}, \qquad \forall~i\in\Omega_{_N}^o,
\feqn
where the lower and upper bounds have the form of the local drift of the Wright-Fisher process with a constant selection $\frac{g_{_N}(i)}{f_{_N}(i)}.$
These bounds suggest in particular the possibility of a comparison of our model with the classical Wright-Fisher process of mathematical genetics,
and furthermore indicate, at least at the level of heuristic argument, that the dynamics of $\xn$ should be similar to that of the Wright-Fisher process with constant selection.
\par
The following lemma, whose proof is included in Section~\ref{proof-lem2}, is the key technical observation we use to derive Theorem~\ref{main1}, our main result regarding the fixation probability in finite populations.
\begin{lemma}
\label{lem2}
Let Assumption~\ref{assume1} hold. Then:
\begin{itemize}
\item[(a)]
There exists a constant $\rho\in (0,1)$ such that
$E\bigl(\rho^{\xn_{t+1}}|\xn_t=i\bigr)\leq \rho^i$ for any $N\geq N_0,$ $i\in\Omega_{_N}^o$ and integer $t\in \zz_+.$
\item[(b)]
There exists a constant $\theta\in (0,1)$ such that
$E\bigl(\theta^{\xn_{t+1}}|\xn_t=i\bigr)\geq \theta^i$ for any $N\geq N_0,$ $i\in\Omega_{_N}^o$ and integer $t\in \zz_+.$
\item[(c)] Furthermore, in the above conclusions one can choose, respectively,
\beqn
\label{parameters}
\rho=e^{-2(1-\gamma)}\qquad\mbox{and}\qquad \theta=e^{-\frac{2(1-\alpha)}{\alpha}}.
\feqn
\end{itemize}
\end{lemma}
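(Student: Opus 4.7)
The plan is to establish (a) and (b) by direct computation with the binomial representation of $\xn_{t+1}|\xn_t$; part (c) will then follow by inspection of the explicit constants used in the argument. Since the two parts are structurally identical, I describe (a) in detail. Because $\xn_{t+1}|\xn_t=i$ is $BIN\bigl(N,\xi_{_N}(i)\bigr)$ by \eqref{wf}, the binomial probability generating function gives
\[
E\bigl(\rho^{\xn_{t+1}}|\xn_t=i\bigr) = \bigl(1 - (1-\rho)\xi_{_N}(i)\bigr)^N.
\]
Writing $\beta:=g_{_N}(i)/f_{_N}(i)\in[\alpha,\gamma]$ (as guaranteed by Assumption~\ref{assume1}) and $p:=i/N$, a direct computation gives $1-(1-\rho)\xi_{_N}(i)=L(p,\beta)$, where
\[
L(p,\beta):=\frac{p\rho+(1-p)\beta}{p+(1-p)\beta}.
\]
Taking $N$-th roots, (a) is equivalent to the pointwise inequality $L(p,\beta)\leq\rho^p$. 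Since $\partial_\beta L(p,\beta)=p(1-p)(1-\rho)/(p+(1-p)\beta)^2>0$ on $(0,1)$, it suffices to treat the extremal case $\beta=\gamma$, and the task reduces to showing $G(p):=\rho^p-L(p,\gamma)\geq 0$ on $[0,1]$, with the free observation that $G(0)=G(1)=0$.

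To prove $G\geq 0$, I will show that $G'$ changes sign exactly once on $(0,1)$, from positive to negative; combined with the endpoint vanishing, this forces $G\geq 0$ throughout. Decompose $G'(p)=B(p)-A(p)$, where $A(p):=|\log\rho|\rho^p$ and $B(p):=\gamma(1-\rho)/(p+(1-p)\gamma)^2$, both positive and decreasing. The crux is the monotonicity of the ratio $R(p):=A(p)/B(p)$; a short computation gives
\[
\frac{d}{dp}\log R(p) = -|\log\rho| + \frac{2(1-\gamma)}{p+(1-p)\gamma},
\]
and the specific choice $\rho=e^{-2(1-\gamma)}$ makes $|\log\rho|=2(1-\gamma)$, whereupon the right-hand side simplifies to $2(1-\gamma)^2(1-p)/(p+(1-p)\gamma)\geq 0$. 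Hence $R$ is strictly increasing on $[0,1)$, so the equation $R(p)=1$ has at most one solution in $(0,1)$.

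The endpoint values $R(0)<1<R(1)$, which ensure that $G'=B(1-R)$ changes sign exactly once, reduce upon substitution of $\rho=e^{-2(1-\gamma)}$ and $x:=1-\gamma$ to the elementary inequalities
\[
1-e^{-2x}\geq 2x(1-x) \qquad \text{and} \qquad (1+x)e^{-2x}\geq 1-x
\]
on $x\in(0,1]$. Each is routine: the difference of the two sides, together with its first derivative, vanishes at $x=0$, while the second derivative is nonnegative on $[0,1]$, so the inequality follows by integrating twice.

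The main technical obstacle will be identifying the right auxiliary function in the second step: attempts to establish concavity or convexity of $G$ directly fail because $G''$ has no definite sign, but the logarithmic derivative of $R=A/B$ depends on $p$ only through the linear expression $p+(1-p)\gamma$, so monotonicity becomes transparent once the explicit value of $\rho$ is substituted. Part (b) follows by the analogous argument with the inequality reversed: $\partial_\beta L>0$ forces the extremal case for the submartingale condition to be $\beta=\alpha$, and the choice $\theta=e^{-2(1-\alpha)/\alpha}$ makes the analogous ratio strictly monotone while reducing the endpoint conditions, upon setting $y:=(1-\alpha)/\alpha$, to $e^{2y}\geq 1+2y+2y^2$ and $(1+y)e^{-2y}\geq 1-y$, both immediate from the Taylor series of $e^{\pm 2y}$.
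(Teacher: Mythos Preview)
Your proof is correct. The reduction via the binomial probability generating function and the monotonicity of $L(p,\beta)$ in $\beta$ to the extremal values $\beta=\gamma$ (for (a)) and $\beta=\alpha$ (for (b)) is exactly what the paper does: after taking $N$-th roots, the paper arrives at the equivalent inequalities
\[
\frac{x}{x+(1-x)\gamma}\geq \frac{1-\rho^x}{1-\rho}
\qquad\text{and}\qquad
\frac{x}{x+(1-x)\alpha}\leq \frac{1-\theta^x}{1-\theta},
\]
which are precisely your $L(p,\gamma)\leq\rho^p$ and $L(p,\alpha)\geq\theta^p$ rewritten. The difference is what happens next. The paper does not verify these one-variable inequalities directly; it simply cites Moran's 1960 paper, where the feasible choices $\rho=e^{-2(1-\gamma)}$ and $\theta=e^{-2(1-\alpha)/\alpha}$ are established. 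You instead supply a self-contained argument: you observe that $G(0)=G(1)=0$ and reduce nonnegativity of $G$ to a single sign change of $G'$, which you obtain from the monotonicity of the ratio $R=A/B$. The key point---that $(\log R)'$ collapses to a manifestly signed expression precisely for the stated values of $\rho$ and $\theta$---is a neat way to see why Moran's constants are the natural ones, and the endpoint checks $R(0)\lessgtr 1\lessgtr R(1)$ reduce to the elementary inequalities you list, all of which are easily verified. So your route is genuinely more elementary and self-contained than the paper's citation-based argument, at the cost of being longer; the paper's approach has the advantage of brevity and of pointing the reader to the classical source where sharper variants (Arnold, Buckley--Seneta) can also be found.
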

The lemma is a suitable modification of the ideas of Moran \cite{mbounds,mgeneral} adapted to the present model with a frequency-dependent selection. The values of $\rho$ and $\theta$ suggested in \eqref{parameters} are obtained from the corresponding estimates of Moran for the classical Wright-Fisher chain with a constant
selection \cite{mbounds}. Slightly better bounds can be obtained based on Arnold's elaboration of Moran's original approach \cite{arnold,buckley_seneta}.
\par
Lemma~\ref{lem2} implies that the sequence $\bigl(\rho^{\xn_t}\bigr)_{t\in \zz_+}$ is a submartingale while $\bigl(\theta^{\xn_t}\bigr)_{t\in \zz_+}$ is a supermartingale.
Since both $\rho^{\xn_t}$ and $\theta^{\xn_t}$ are non-negative random variables bounded from above by one,
Doob's optional stopping theorem \cite[Theorem~5.7.5]{durrett} implies that with probability one,
\beq
\rho^{\xn_0}\geq E\bigl(\rho^{\xn_T}|\xn_0\bigr)=p_{_N}(\xn_0)\rho^N+\bigl(1-p_{_N}(\xn_0)\bigr)
\feq
and
\beq
\theta^{\xn_0}\leq E\bigl(\theta^{X_T}|\xn_0\bigr)=p_{_N}(\xn_0)\theta^N+\bigl(1-p_{_N}(\xn_0)\bigr).
\feq
This yields the following exponential bounds of the form \eqref{exp} for the fixation probabilities.
\begin{theorem}
\label{main1}
Suppose that Assumption~\ref{assume1} holds. Let constants $\rho\in (0,1)$ and $\theta\in (0,1)$ be determined by \eqref{parameters}. Then
\beq
\frac{1-\rho^i}{1-\rho^N}\leq p_{_N}(i)\leq \frac{1-\theta^i}{1-\theta^N},
\feq
for all $N\geq N_0$ and $i\in\Omega_{_N}^o.$
\end{theorem}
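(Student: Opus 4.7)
The plan is to apply Doob's optional stopping theorem to two bounded exponential functionals of $\xn_t$, using Lemma~\ref{lem2} to supply the one-step sub/supermartingale inequalities: one of them produces the lower bound on $p_{_N}(i)$ and the other the upper bound. Concretely, Lemma~\ref{lem2}(a) says that $E(\rho^{\xn_{t+1}}|\xn_t=i)\leq \rho^i$ for every transient $i\in\Omega_{_N}^o$, and this inequality is trivially true at the absorbing states $0$ and $N$ as well. Therefore $M_t:=\rho^{\xn_t}$ is a \emph{supermartingale} in the natural filtration of $\xn$; since $\rho\in(0,1)$ and $\xn_t\in\{0,\ldots,N\}$, $M_t$ takes values in $[\rho^N,1]$ and is uniformly bounded. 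Analogously, part (b) of the lemma makes $\widetilde M_t:=\theta^{\xn_t}$ a uniformly bounded \emph{submartingale}.

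Next I would note that the absorption time $T$ defined in \eqref{ti} is almost surely finite: the chain lives on a finite state space with two absorbing states, and by \eqref{wf} and Assumption~\ref{assume1} the one-step probability of jumping from any transient state to $N$ is positive, so $T$ has geometric tails. Because $M_t$ and $\widetilde M_t$ are uniformly bounded, the bounded form of Doob's optional stopping theorem \cite[Theorem~5.7.5]{durrett} applies, and using $\xn_T\in\{0,N\}$, the identities $\rho^0=\theta^0=1$, and the definition \eqref{pni} of $p_{_N}(i)$, I obtain
\beq
\rho^i &\geq& E\bigl(\rho^{\xn_T}\big|\xn_0=i\bigr) \;=\; p_{_N}(i)\,\rho^N + \bigl(1-p_{_N}(i)\bigr),
\\
\theta^i &\leq& E\bigl(\theta^{\xn_T}\big|\xn_0=i\bigr) \;=\; p_{_N}(i)\,\theta^N + \bigl(1-p_{_N}(i)\bigr).
\feq
Solving the first line for $p_{_N}(i)$ yields $p_{_N}(i)(1-\rho^N)\geq 1-\rho^i$, and since $1-\rho^N>0$ this is exactly $p_{_N}(i)\geq (1-\rho^i)/(1-\rho^N)$. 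Rearranging the second line in the same way gives $p_{_N}(i)\leq (1-\theta^i)/(1-\theta^N)$, which is the claimed upper bound.

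The substantive content has already been absorbed into Lemma~\ref{lem2}, so the remaining deduction is routine; the only point requiring genuine care, and the one that needs to be checked carefully, is the pairing of each part of the lemma with the correct orientation of optional stopping. Part (a) is the \emph{supermartingale} property, so $E(M_T|\xn_0)\leq M_0$, and the rearrangement therefore yields the \emph{lower} bound on $p_{_N}$ with base $\rho$; part (b) is the \emph{submartingale} property, so $E(\widetilde M_T|\xn_0)\geq \widetilde M_0$, producing the \emph{upper} bound with base $\theta$. Swapping this pairing would flip both inequalities and give the reverse of the stated conclusion.
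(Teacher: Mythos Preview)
Your proof is correct and follows essentially the same route as the paper: invoke Lemma~\ref{lem2} to obtain the one-step inequalities, apply the bounded optional stopping theorem to $\rho^{\xn_t}$ and $\theta^{\xn_t}$, and rearrange. Your labeling of the sub/supermartingale roles is in fact the correct one; the paper's text inadvertently swaps the words ``submartingale'' and ``supermartingale'' while nonetheless writing the right inequalities, and you also add the (easy but appropriate) observation that $T<\infty$ almost surely, which the paper leaves implicit.
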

Since $\frac{1-\rho^N}{N}<\frac{1-\rho^i}{i}$ for all $i\in\Omega_{_N}^o,$ the linear bound in \eqref{iovern} can be recovered as a direct implication of Theorem~\ref{main1}. The upper bound suggested in the theorem indicates that the exponential lower bound captures correctly the
qualitative behavior of $p_{_N}(i)$ as a function of the initial state $i.$
\par
Theorem~\ref{main1} can be strengthened to the following coupling result. In what follows we refer to a Markov chain on $\Omega_{_N}$ with transition kernel given by \eqref{wf} as an $\bigl(N,\vxin\bigr)-$binomial process, where $\vxin:=\bigl(\xi_{_N}(0),\xi_{_N}(1),\ldots,\xi_{_N}(N)\bigr)$ is the vector of conditional frequency expectations with $\xi_{_N}(0)=\xi_{_N}(N)=0$ and $\xi_{_N}(i)\in(0,1)$ for any $i\in\Omega_{_N}^o.$
\begin{theorem}
\label{main3}
Suppose that Assumption~\ref{assume1} is satisfied. Let
\beqn
\label{pimu}
\overrightarrow{\eta_{_N}}(i)=\frac{\gamma^{-1}i}{\gamma^{-1}i+(N-i)}\qquad \mbox{and}\qquad \overrightarrow{\zeta_{_N}}(i)=\frac{\alpha^{-1}i}{\alpha^{-1}i+(N-i)}.
\feqn
Then, in a possibly enlarged probability space, for any $N\geq N_0$ and $i\in\Omega_{_N}^o$ there exists a Markov chain $(\xna_t,\xnb_t,\xnc_t)_{t\in \zz_+}$ on $\Omega_{_N}\times\Omega_{_N}\times\Omega_{_N}$
such that the following holds true:
\begin{enumerate}
\item $(\xna_t)_{t\in \zz_+}$ is an $(N,\overrightarrow{\eta_{_N}})$-binomial process.
\item $(\xnb_t)_{t\in \zz_+}$ is an $\bigl(N,\vxin\bigr)$-binomial process with $\vxin$ given by \eqref{xin}.
\item $(\xnc_t)_{t\in \zz_+}$ is an $(N,\overrightarrow{\zeta_{_N}})$-binomial process.
\item $\xna_0= \xnb_0= \xnc_0=i$ and $\xna_t~\leq~\xnb_t~\leq~\xnc_t$ for all $t\in \nn,$ with probability one.
\end{enumerate}
\end{theorem}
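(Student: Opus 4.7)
The plan is to express $\xnb$'s transition kernel in a form that makes the comparison with $\xna$ and $\xnc$ transparent, then perform a standard quantile coupling one generation at a time. Rewrite $\xi_{_N}(i)=\frac{i}{i+(N-i)r_{_N}(i)}$ with $r_{_N}(i)=g_{_N}(i)/f_{_N}(i).$ Since the right side is strictly decreasing in $r_{_N}(i),$ the bounds $\alpha\leq r_{_N}(i)\leq \gamma$ from Assumption~\ref{assume1} immediately yield $\overrightarrow{\eta_{_N}}(i)\leq \xi_{_N}(i)\leq \overrightarrow{\zeta_{_N}}(i)$ on $\Omega_{_N}^o,$ with trivial equality at the two absorbing states. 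Second, a one-line calculus check shows $\frac{d}{di}\frac{i}{i+\lambda(N-i)}=\frac{\lambda N}{(i+\lambda(N-i))^2}>0$ for any $\lambda>0,$ so $\overrightarrow{\eta_{_N}}$ and $\overrightarrow{\zeta_{_N}}$ are each monotone increasing in the state. Combining the two observations, whenever $a\leq b\leq c$ in $\Omega_{_N},$
\begin{equation*}
\overrightarrow{\eta_{_N}}(a)\leq \overrightarrow{\eta_{_N}}(b)\leq \xi_{_N}(b)\leq \overrightarrow{\zeta_{_N}}(b)\leq \overrightarrow{\zeta_{_N}}(c),
\end{equation*}
with the boundary configurations $b\in\{0,N\}$ forcing $a$ or $c$ to coincide with $b$ and the inequalities collapsing trivially.

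Next, enlarge the probability space to carry an i.i.d.\ family $\{U_{t,j}:t\in\nn,\,1\leq j\leq N\}$ of uniform $[0,1]$ random variables, and set $\xna_0=\xnb_0=\xnc_0=i.$ Given $(\xna_t,\xnb_t,\xnc_t)=(a,b,c)$ with $a\leq b\leq c,$ define
\begin{align*}
\xna_{t+1}&=\sum_{j=1}^N \ind{U_{t+1,j}\leq \overrightarrow{\eta_{_N}}(a)}, \\
\xnb_{t+1}&=\sum_{j=1}^N \ind{U_{t+1,j}\leq \xi_{_N}(b)}, \\
\xnc_{t+1}&=\sum_{j=1}^N \ind{U_{t+1,j}\leq \overrightarrow{\zeta_{_N}}(c)}.
\end{align*}
Each coordinate's update depends only on its own current state through the prescribed success probability, so conditional on $(\xna_t,\xnb_t,\xnc_t)$ the marginal laws of $\xna_{t+1},\xnb_{t+1},\xnc_{t+1}$ are precisely $BIN(N,\overrightarrow{\eta_{_N}}(a)),$ $BIN\bigl(N,\xi_{_N}(b)\bigr),$ $BIN(N,\overrightarrow{\zeta_{_N}}(c)),$ which gives items (1)--(3). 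The threshold ordering proved above, combined with the monotonicity of indicator functions in the threshold, yields $\xna_{t+1}\leq \xnb_{t+1}\leq \xnc_{t+1}$ almost surely; an easy induction on $t$ then propagates the ordering and establishes (4).

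There is no serious obstacle here: the entire construction reduces to the two elementary monotonicity facts of the first paragraph, after which the quantile coupling is standard. The only subtle point worth emphasizing is the \emph{cross-state} inequality $\overrightarrow{\eta_{_N}}(a)\leq \xi_{_N}(b)\leq \overrightarrow{\zeta_{_N}}(c)$ for $a\leq b\leq c,$ which requires both ingredients simultaneously---the pointwise ratio bound on $g_{_N}/f_{_N}$ coming from Assumption~\ref{assume1}, and the monotonicity of the envelope kernels $\overrightarrow{\eta_{_N}}$ and $\overrightarrow{\zeta_{_N}}$ in the state variable---so that even though the three chains may be at different positions at time $t,$ the resulting binomial success probabilities remain correctly ordered at the subsequent step.
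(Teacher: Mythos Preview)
Your proof is correct and follows essentially the same route as the paper's own argument: both establish the pointwise sandwich $\overrightarrow{\eta_{_N}}(i)\leq\xi_{_N}(i)\leq\overrightarrow{\zeta_{_N}}(i)$ from Assumption~\ref{assume1}, invoke the monotonicity of the envelope kernels $\overrightarrow{\eta_{_N}}$ and $\overrightarrow{\zeta_{_N}}$ in the state, and then run the threshold (quantile) coupling via a common array of i.i.d.\ uniforms to propagate the ordering by induction. Your explicit emphasis on the cross-state inequality $\overrightarrow{\eta_{_N}}(a)\leq\xi_{_N}(b)\leq\overrightarrow{\zeta_{_N}}(c)$ for $a\leq b\leq c$ is exactly the key step the paper uses as well.
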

The theorem utilizes in our setting the idea of comparison of a Markov process to a similar but more explicitly understood one (see, for instance,
\cite{illiad,mbounds_queues,monotone5,monotone4,monotone6} and references therein for early work in this direction).
Ignoring the technicalities, the theorem is a particular case of a more general coupling comparison result due to O'Brien \cite{monotone4}
(see in addition, for instance, Theorem~2 in \cite{monotone5} and Theorem~3.1 in \cite{monotone6} for related results). In Section~\ref{proof-main3} we give a
simple self-contained proof of Theorem~\ref{main3}, specifically exploiting a particular structure of binomial processes.
\par
Theorem~\ref{main3} asserts that there exists a coupling such that for almost any realization of the triple process $(\xna_t,\xnb_t,\xnc_t)_{t\in \zz_+}$ the entire trajectory of the frequency-dependent model $(\xnb_t)_{t\in \zz_+},$
whose distribution coincides with the distribution of the chain $(\xn_t)_{t\in \zz_+}$ studied in this paper, is
placed between trajectories of two Wright-Fisher models with constant selection. The result suggests that the qualitative behavior
of $(\xn_t)_{t\in \zz_+}$ in a macroscopic level is similar to those of classical Wright-Fisher models with a constant selection. Furthermore, the hierarchy of the Wright-Fisher models allows to derive lower and upper bounds for important characteristics of our model in terms of the analogous quantities for standard
Wright-Fisher models with selection. We remark that Theorem~\ref{main1} can be deduced from Theorem~\ref{main3} combined with results of Moran in \cite{mbounds} which show that the fixation probabilities of $(\xna_t)_{t\in \zz_+}$ are dominated from below by $\frac{1-\rho^i}{1-\rho^N}$ while the fixation probabilities of $(\xnc_t)_{t\in \zz_+}$ are dominated from above by $\frac{1-\theta^i}{1-\theta^N},$ where $\rho$ and $\theta$ are defined in \eqref{parameters}.
\par
Note both $\eta_{_N}(i)$ and $\zeta_{_N}(i)$ in \eqref{pimu} are non-decreasing functions of $i.$
It turns out that $\xi_{_N}(i)$ has a similar property. More precisely, we have:
\begin{proposition}
\label{propo1}
Let Assumption~\ref{assume1} hold. Then the following holds true for any $N\geq N_0:$
\begin{itemize}
\item [(i)] $\xi_{_N}(i)< \xi_{_N}(i+1)$ for all $i\in\Omega_{_N}^o.$
\item [(ii)] For any $k\in\Omega_{_N}^o$ and $i,j\in\Omega_{_N}^o$ such that $i<j,$ we have
\beq
P\bigl(\xn_{t+1}\leq k\bigl|\xn_t=i\bigr)<P\bigl(\xn_{t+1}\leq k\bigl|\xn_t=j\bigr).
\feq
\end{itemize}
\end{proposition}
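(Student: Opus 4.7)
The plan for part (i) is to promote $\xi_{_N}$ to a differentiable function of the real variable $i \in [0,N]$ and verify $\xi_{_N}'(i) > 0$ throughout. Writing $\xi_{_N}(i) = u(i)/v(i)$ with $u(i) = if_{_N}(i)$ and $v(i) = if_{_N}(i) + (N-i)g_{_N}(i)$, the sign of $\xi_{_N}'(i)$ coincides with the sign of $q(i) := u'(i)v(i) - u(i)v'(i)$. Exploiting the linearity $f_{_N}(i) = Ai + B$ and $g_{_N}(i) = Ci + D$ (with $A = w(a-b)/(N-1)$, $B = f_{_N}(0)$, $C = w(c-d)/(N-1)$, $D = g_{_N}(0)$), direct expansion collapses all cubic contributions and yields the quadratic
\[
q(i) = (ANC + BC - AD)\,i^{2} + 2AND\,i + BND.
\]
Elementary factorization gives the boundary identities $q(0) = N f_{_N}(0) g_{_N}(0)$ and $q(N) = N f_{_N}(N) g_{_N}(N)$, both strictly positive under Assumption~\ref{assume1}.

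It remains to show $q(i) > 0$ on $(0,N)$, which I would handle by a short case analysis on the leading coefficient and on the signs of $A$ and $C$. If $ANC + BC - AD \le 0$, then $q$ is concave (or affine) on $[0,N]$, so its minimum is attained at an endpoint and is strictly positive. If $ANC + BC - AD > 0$ the quadratic is convex with vertex $i^{\ast} = -AND/(ANC + BC - AD)$; one checks that $A \ge 0$ forces $i^{\ast} \le 0$ and that $C \le 0$ forces $i^{\ast} \ge N$ (the latter using $f_{_N}(N) = AN + B > 0$), so in each of these sub-cases the minimum on $[0,N]$ lies at an endpoint. The only remaining configuration is $A < 0$ and $C > 0$, which automatically forces $BC > 0 > AD$; for this case the identity
\[
q(i^{\ast})\,(ANC + BC - AD) \;=\; ND\,f_{_N}(N)\,(BC - AD)
\]
shows that $q(i^{\ast}) > 0$. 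Thus $q > 0$ on $[0,N]$, so $\xi_{_N}$ is strictly increasing there, and in particular $\xi_{_N}(i+1) > \xi_{_N}(i)$ for every $i \in \Omega_{_N}^{o}$.

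Part (ii) is then an immediate consequence: conditional on $\xn_{t} = i$, $\xn_{t+1}$ has the $\mathrm{Bin}\bigl(N, \xi_{_N}(i)\bigr)$ distribution, and the binomial family is strictly stochastically monotone in its parameter at every nontrivial level $k \in \{0, 1, \ldots, N-1\}$. Combining this with the strict inequality $\xi_{_N}(i) < \xi_{_N}(j)$ from part (i) delivers the strict comparison of the tail probabilities asserted in (ii).

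I expect the principal obstacle to be the case analysis that closes part (i): while each sub-case reduces to a one-line computation, Assumption~\ref{assume1} only constrains the ratio $g_{_N}/f_{_N}$ and thus does not pin down the sign of $a-b$ or $c-d$, so all four combinations for the signs of $A$ and $C$ must be treated separately. Every other step is an elementary algebraic identity.
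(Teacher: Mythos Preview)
Your approach to part~(i)---promote $\xi_{_N}$ to a real variable and show its derivative is positive---is genuinely different from the paper's. The paper argues discretely: it rewrites $\xi_{_N}(i)<\xi_{_N}(i+1)$ as the ratio inequality $\frac{g_{_N}(i+1)}{g_{_N}(i)}\cdot\frac{f_{_N}(i)}{f_{_N}(i+1)}<\frac{i+1}{i}\cdot\frac{N-i}{N-i-1}$ and bounds the left side by iterating the elementary mediant lemma $\frac{A+B}{C+D}\le\max\{A/C,B/D\}$. Your route is more systematic and your algebra (the quadratic $q$, the boundary factorizations, the vertex identity) all checks out.

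There is, however, a real gap in the endpoint step. Assumption~\ref{assume1} constrains $f_{_N}$ and $g_{_N}$ only on $\Omega_{_N}^o$; the extrapolated values $f_{_N}(0)$ and $g_{_N}(N)$ sit outside the model and need not be positive. Concretely, take $w=1$, $(a,b,c,d)=(100,1,1,\tfrac12)$, $N=3$: one has $g_{_3}(1)/f_{_3}(1)=3/4$ and $g_{_3}(2)/f_{_3}(2)=2/101$, so Assumption~\ref{assume1} holds with $N_0=3$, yet $f_{_3}(0)=\pi_{_A}(0,3)=\frac{3-100}{2}<0$ and hence $q(0)=Nf_{_N}(0)g_{_N}(0)<0$. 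Here $A>0$, so you are in your sub-case ``$A\ge 0$, vertex $\le 0$,'' and the argument collapses because the minimum on $[0,N]$ is now negative. The statement $q>0$ on $[1,N-1]$ is still true in this example (the unique zero of $q$ lies in $(0,1)$), but your case analysis does not reach it. One repair is to observe that your quadratic can be rewritten as
\[
q(i)=Nf_{_N}(i)g_{_N}(i)+i(N-i)(AD-BC),
\]
which makes the first term positive on $[1,N-1]$ directly from Assumption~\ref{assume1}; you would then only need to control the case $BC>AD$, and the trivial boundary case $i=N-1$ (where $\xi_{_N}(N)=1$) can be handled separately.

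Your part~(ii) is essentially the paper's: the paper supplies the one-line proof you cite, computing $\frac{d}{dx}\sum_{j\le k}\binom{N}{j}x^j(1-x)^{N-j}=-N\binom{N-1}{k}x^k(1-x)^{N-k-1}<0$.
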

The second part of Proposition~\ref{propo1} asserts, using the terminology coined by \cite{illiad}, that $\xn$ is a {\it stochastically monotone} Markov chain.
It is stated without proof in \cite{illiad,mbounds_queues} that $(ii)$ is a direct consequence of $(i)$ (as a matter of fact, it is noted in \cite{illiad,mbounds_queues}
that the binomial model considered in \cite{mbounds} is an example of a stochastically monotone chain). For the reader's convenience we include a short proof of the
implication $(i)\Rightarrow (ii)$ together with the proof of $(i)$ in Section~\ref{propo1-proof}.
\par
With Proposition~\ref{propo1} in hand, we can formally prove the following intuitively obvious statement (see Section~\ref{propo1-proof} for details).
\begin{corollary}
\label{coro1}
Suppose that Assumption~\ref{assume1} is satisfied. Then the following holds true:
\begin{itemize}
\item [(i)]
For a fixed $n\geq N_0$ and $i\in \Omega_{_N}^o,$ consider $p_{_N}(i)$ as a function of the parameters $w$ and $a,b,c,d$
which is defined within the domain described by Assumption~1', namely in
\beq
\cald:=\bigl\{(a,b,c,d,w)\in\RR^5: w\in (0,1),\, a>c>0,~\mbox{\rm and}~b>d>0\bigr\}.
\feq
Then the partial derivatives of $p_{_N}(i)$ with respect to any of the parameters $a,b,c,d,$ and $w$ exist anywhere within $\cald.$ Furthermore,
\beq
&& \frac{\partial p_{_N}(i)}{\partial w}>0,\,~ \frac{\partial p_{_N}(i)}{\partial b}>0,\,~\frac{\partial p_{_N}(i)}{\partial c}<0,\\
&& \frac{\partial p_{_N}(i)}{\partial a}>0~\mbox{\rm unless}~N=2,\,~ \frac{\partial p_{_N}(i)}{\partial d}<0~\mbox{\rm unless}~N=2.
\feq
\item[(ii)] Let Assumption~\ref{assume1} hold. Then for any fixed $N\geq N_0,$ $p_{_N}(i)$ is a strictly increasing function of the parameter
$i$ on $\Omega_{_N}.$
\end{itemize}
\end{corollary}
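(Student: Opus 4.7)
The plan is to deduce both parts from Proposition~\ref{propo1} via the linear system for the absorption probabilities. Writing $Q$ for the $(N-1)\times(N-1)$ transient-to-transient block of the transition kernel \eqref{wf} and $r=\bigl(\xi_{_N}(i)^N\bigr)_{i=1}^{N-1}$ for the vector of one-step absorption probabilities at $N,$ the vector $p=\bigl(p_{_N}(1),\ldots,p_{_N}(N-1)\bigr)^\top$ is the unique solution of $(I-Q)p=r.$ I will also use the one-step identity $p_{_N}(i)=\Phi(\xi_{_N}(i)),$ where
$$\Phi(x):=\sum_{j=0}^N\binom{N}{j}x^j(1-x)^{N-j}p_{_N}(j).$$

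I would prove (ii) first. Non-decreasingness of $p_{_N}$ follows from a standard Markov coupling of two copies of $\xn$ started at $i$ and $i+1$: by Proposition~\ref{propo1}(ii) the pair can be realized jointly so that one pathwise dominates the other, and the larger copy must absorb at $N$ whenever the smaller one does. Strict monotonicity then follows from $p_{_N}(i)=\Phi(\xi_{_N}(i)),$ combined with $\xi_{_N}(i)<\xi_{_N}(i+1)$ from Proposition~\ref{propo1}(i) and the strict increase of $\Phi$ on $(0,1).$ The latter is immediate from the common-uniform coupling of $\mathrm{Bin}(N,x_1)$ and $\mathrm{Bin}(N,x_2)$ for $x_1<x_2:$ this coupling produces pathwise domination and assigns probability $(x_2-x_1)^N>0$ to the event that the two binomials equal $0$ and $N$ respectively, on which $p_{_N}(0)=0<1=p_{_N}(N).$

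For (i), the entries of $Q$ and $r$ depend only on $\xi_{_N}(1),\ldots,\xi_{_N}(N-1),$ and those in turn depend rationally on $(a,b,c,d,w)$ with strictly positive denominators on $\cald;$ Cramer's rule then makes $p_{_N}(i)$ real-analytic on $\cald,$ so all partial derivatives exist. The sign claims are read off from the chain rule
$$\frac{\partial p_{_N}(i)}{\partial\theta}=\sum_{k=1}^{N-1}\frac{\partial p_{_N}(i)}{\partial\xi_{_N}(k)}\cdot\frac{\partial\xi_{_N}(k)}{\partial\theta},\qquad\theta\in\{a,b,c,d,w\}.$$
Differentiating \eqref{xin} explicitly, $\partial\xi_{_N}(k)/\partial a$ has the same sign as $\partial f_{_N}(k)/\partial a=w(k-1)/(N-1),$ which is strictly positive for $k\geq 2$ and vanishes at $k=1;$ consequently it vanishes on all of $\Omega_{_N}^o$ precisely when $N=2,$ matching the exception in the statement. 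The symmetric computation yields $\partial\xi_{_N}(k)/\partial d=0$ iff $k=N-1,$ while $\partial\xi_{_N}(k)/\partial b,\partial\xi_{_N}(k)/\partial w>0$ and $\partial\xi_{_N}(k)/\partial c<0$ for every $k\in\Omega_{_N}^o.$

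The remaining ingredient, which I expect to be the main technical point, is the strict positivity of $\partial p_{_N}(i)/\partial\xi_{_N}(k)$ for all $i,k\in\Omega_{_N}^o.$ Differentiating $(I-Q)p=r$ in $\xi_{_N}(k),$ and noting that only the $k$-th row of $Q$ and the $k$-th entry of $r$ depend on this variable, one obtains $\partial_{\xi_{_N}(k)}p=\Phi'(\xi_{_N}(k))\cdot(I-Q)^{-1}e_k.$ The standard identity for derivatives of binomial expectations gives $\Phi'(x)=N\,E\bigl[p_{_N}(B+1)-p_{_N}(B)\bigr]$ with $B\sim\mathrm{Bin}(N-1,x),$ which is strictly positive on $(0,1)$ by the strict monotonicity of $p_{_N}$ established in (ii); irreducibility of the transient block makes every entry of the $k$-th column of $(I-Q)^{-1}$ strictly positive, so $\partial p_{_N}(i)/\partial\xi_{_N}(k)>0$ as needed.
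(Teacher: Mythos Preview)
Your proof is correct and takes a genuinely different route from the paper's. The paper argues as follows: existence of the partial derivatives comes from the implicit function theorem applied to the system $p_{_N}(i)=\sum_{j}\qn_{i,j}p_{_N}(j)+\qn_{i,N}$, and then both the parameter-monotonicity in (i) and the $i$-monotonicity in (ii) are deduced in one stroke from an O'Brien-type coupling (Proposition~\ref{main3l}), which compares two $(N,\vxi)$-binomial processes whose $\xi$-vectors are pointwise ordered and one of which is monotone in the sense of Proposition~\ref{propo1}(i). In other words, the paper perturbs the parameter, couples the resulting two Wright--Fisher chains pathwise, and reads off the ordering of their absorption probabilities.

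You instead work analytically: Cramer's rule gives real-analyticity, and the chain rule through the variables $\xi_{_N}(1),\ldots,\xi_{_N}(N-1)$ reduces everything to the single fact $\partial p_{_N}(i)/\partial\xi_{_N}(k)>0$, which you obtain from the clean identity $\partial_{\xi_{_N}(k)}p=\Phi'(\xi_{_N}(k))\,(I-Q)^{-1}e_k$ together with positivity of the fundamental matrix and the formula $\Phi'(x)=N\,E\bigl[p_{_N}(B+1)-p_{_N}(B)\bigr]$. This buys you an explicit expression for the derivative, so the \emph{strict} signs (and the $N=2$ exceptions for $a$ and $d$) fall out transparently; the paper's coupling argument gives monotonicity directly but leaves the strictness of the partial derivatives a bit implicit. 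Conversely, the paper's approach is more unified---one coupling lemma covers both parts---and avoids differentiating the linear system at all. Two small points worth spelling out in your write-up: the sign of $\partial\xi_{_N}(k)/\partial w$ is not quite as immediate as for $a,b,c,d$ (it reduces to $\pi_{_A}(k,N)>\pi_{_B}(k,N)$, which holds under Assumption~\ref{assume1}); and the coupling you invoke for part (ii) is precisely the content of Proposition~\ref{main3l} with $\overrightarrow{\eta_{_N}}=\vxin$.
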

In other words, $p_{_N}(i)$ is an increasing function of the initial state $i$, and it is also a smooth and strictly monotone function of each of the five parameters $a,b,c,d,$ and $w.$
\subsection{Branching process limit for large populations}
\label{branching}
In this section we consider the asymptotic behavior of the model when the population size approaches infinity.
In view of Theorem~\ref{main1} we have the following bounds for the limiting fixation probability:
\beq
1-\rho^i< \lim_{N\to\infty} p_{_N}(i) <1-\theta^i.
\feq
Thus the following result is a direct implication of Theorem~\ref{main1}.
\begin{corollary}
\label{cor}
Under Assumption~\ref{assume1},
\begin{itemize}
\item[(a)] $\liminf_{N\to\infty} p_{_N}(i)>0,$ $\forall~i\in\nn.$
\item[(b)] $\lim\limits_{N\to\infty} p_{_N}(i_{_N})=1$ for any sequence $i_{_N}\in\Omega_{_N}^o$ such that $\lim\limits_{N\to\infty}i_{_N}=+\infty.$
\end{itemize}
\end{corollary}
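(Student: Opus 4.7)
The plan is to read off both claims directly from the exponential lower bound in Theorem~\ref{main1}, namely
\[
p_{_N}(i)\geq \frac{1-\rho^i}{1-\rho^N},\qquad N\geq N_0,\,i\in\Omega_{_N}^o,
\]
with $\rho\in(0,1)$ as in \eqref{parameters}. No ingredients beyond this bound and the elementary estimate $1-\rho^N<1$ should be needed; the excerpt itself flags the corollary as a direct consequence of Theorem~\ref{main1}, so the work is just to organize the two limit computations cleanly.

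For part (a), I would fix $i\in\nn$ and, for all sufficiently large $N$, place $i$ in $\Omega_{_N}^o$ so that the lower bound of Theorem~\ref{main1} applies. Since $\rho\in(0,1)$, the denominator satisfies $\rho^N\to 0$ as $N\to\infty$, hence
\[
\liminf_{N\to\infty}p_{_N}(i)\geq \lim_{N\to\infty}\frac{1-\rho^i}{1-\rho^N}=1-\rho^i>0,
\]
which is the required strictly positive lower bound.

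For part (b), I would again invoke the lower bound and use $1-\rho^N<1$ to get the cruder but $i_{_N}$-focused inequality
\[
p_{_N}(i_{_N})\geq \frac{1-\rho^{i_{_N}}}{1-\rho^N}\geq 1-\rho^{i_{_N}}.
\]
Since $\rho\in(0,1)$ and $i_{_N}\to\infty$, we have $\rho^{i_{_N}}\to 0$, so $\liminf_{N\to\infty}p_{_N}(i_{_N})\geq 1$. Combined with the trivial bound $p_{_N}(i_{_N})\leq 1$, this forces $\lim_{N\to\infty}p_{_N}(i_{_N})=1$.

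There is no genuine obstacle here; the only care needed is to verify that the $N$-dependent denominator $1-\rho^N$ does not spoil either limit, and this is handled uniformly by the fact that $\rho^N$ is bounded away from $1$ and in fact tends to $0$. The upper bound of Theorem~\ref{main1} is not required for this corollary, though it yields, as already noted in the preceding display, the matching asymptotic upper bound $\limsup_{N\to\infty}p_{_N}(i)\leq 1-\theta^i$.
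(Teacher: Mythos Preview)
Your proposal is correct and matches the paper's approach: the corollary is stated there as a direct implication of Theorem~\ref{main1}, obtained from the lower bound $p_{_N}(i)\geq (1-\rho^i)/(1-\rho^N)$ exactly as you argue. Your write-up simply supplies the routine limit details that the paper leaves implicit.
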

The first part of Corollary~\ref{cor} can be refined as follows.
\begin{theorem}
\label{newt}
Let Assumption~\ref{assume1} hold. Then $\lim_{N\to\infty} p_{_N}(i)$ exists and is strictly positive for any $i\in\nn.$
Furthermore,
\beq
\lim_{N\to\infty} p_{_N}(i)=1-q^i,
\feq
where $q$ is the unique in $(0,1)$ root of the equation $q=e^{-\lambda(1-q)}$ with
\beqn
\label{lambda}
\lambda=\frac{1-w+wb}{1-w+wd}.
\feqn
\end{theorem}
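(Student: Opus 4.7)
The plan is to identify $1-q^i$ as the survival probability of a supercritical Galton--Watson branching process $(Z_t)_{t\in\zz_+}$ with Poisson$(\lambda)$ offspring distribution, and then to justify a branching approximation for the early-phase dynamics of $\xn$ when the initial state $i\in\nn$ is held fixed while $N\to\infty$. The key input is a Poisson limit for the one-step kernel: for each fixed $k\in\nn$, Assumption~\ref{assume1} together with \eqref{edgel} gives $f_{_N}(k)\to 1-w+wb$ and $g_{_N}(k)\to 1-w+wd$ as $N\to\infty$, so $N\xi_{_N}(k)\to k\lambda$, and the binomial-to-Poisson limit theorem forces the conditional law of $\xn_{t+1}$ given $\xn_t=k$ to converge weakly to Poisson$(k\lambda)$, which is precisely the one-step law of $Z$ started from $k$. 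Since Assumption~1' yields $b>d$ and hence $\lambda>1$, the process $Z$ is supercritical, and standard Galton--Watson theory identifies its single-ancestor extinction probability as the unique root $q\in(0,1)$ of $q=e^{-\lambda(1-q)}$; the extinction probability from $Z_0=i$ then equals $q^i$.

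The core of the argument is the convergence, for every fixed integer $M>i$, of the exit-location probability
\begin{equation*}
P\bigl(\xn_{T_M^{(N)}}\geq M\bigm|\xn_0=i\bigr),\qquad T_M^{(N)}:=\inf\{t\geq 0:\xn_t\geq M\text{ or }\xn_t=0\},
\end{equation*}
to the corresponding branching quantity $P(B_M):=P(Z\text{ reaches }M\text{ before hitting }0)$. On any finite horizon $\mathcal{T}$ the event $\{T_M^{(N)}\leq\mathcal{T},\,\xn_{T_M^{(N)}}\geq M\}$ decomposes into a finite sum of joint one-step probabilities over trajectories in $\{1,\ldots,M-1\}$, each of which converges term-by-term by the Poisson limit above. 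To pass $\mathcal{T}\to\infty$ one needs a uniform-in-$N$ tail bound on $T_M^{(N)}$: for any $k\in\{1,\ldots,M-1\}$ one checks that $\xi_{_N}(k)\leq C_M/N$ for some $C_M$ and all $N$ large, so the one-step probability of a direct jump to $0$ satisfies $(1-\xi_{_N}(k))^N\geq c_M>0$, yielding $P(T_M^{(N)}>\mathcal{T})\leq(1-c_M)^{\mathcal{T}}$ uniformly in $N$. Combined with the (easier) analogous bound for $Z$, this promotes the finite-horizon convergence to $P(\xn_{T_M^{(N)}}\geq M)\to P(B_M)$.

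The final step combines this hitting-probability convergence with Theorem~\ref{main1} via the strong Markov property at $T_M^{(N)}$. Since $p_{_N}(0)=0$, since Theorem~\ref{main1} gives $p_{_N}(j)\geq(1-\rho^M)/(1-\rho^N)$ for every $j\geq M$, and since $p_{_N}\leq 1$ trivially,
\begin{equation*}
P\bigl(\xn_{T_M^{(N)}}\geq M\bigr)\cdot\frac{1-\rho^M}{1-\rho^N}\leq p_{_N}(i)\leq P\bigl(\xn_{T_M^{(N)}}\geq M\bigr).
\end{equation*}
Letting $N\to\infty$ pins $\liminf p_{_N}(i)$ and $\limsup p_{_N}(i)$ between $(1-\rho^M)P(B_M)$ and $P(B_M)$; letting $M\to\infty$ and noting that the events $\{Z$ reaches $M$ before $0\}$ decrease to the survival event of $Z$ (of probability $1-q^i$) while $\rho^M\to 0$ produces the desired identity $\lim_{N\to\infty}p_{_N}(i)=1-q^i$, which is strictly positive since $q<1$. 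The main technical obstacle is the uniform-in-$N$ tail estimate on $T_M^{(N)}$: without it, the term-by-term Poisson approximation controls only finite-horizon events, and one could not preclude $\xn$ lingering in $\{1,\ldots,M-1\}$ for arbitrarily long times as $N$ grows, which would invalidate the passage to the limit.
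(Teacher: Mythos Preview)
Your proposal is correct and rests on the same essential ingredients as the paper's proof: the Poisson limit of the one-step kernel, identification of the limiting Galton--Watson process and its extinction probability $q$, the Moran-type lower bound of Theorem~\ref{main1}, and a uniform-in-$N$ tail estimate coming from the one-step probability of jumping to $0$ from a bounded state. The organization differs. You introduce a threshold level $M$, prove that the exit-location probability $P\bigl(\xn_{T_M^{(N)}}\geq M\bigr)$ converges to the corresponding branching quantity, and then sandwich $p_{_N}(i)$ via the strong Markov property at $T_M^{(N)}$. The paper instead works directly with $P\bigl(\tn_0<\infty\bigr)=1-p_{_N}(i)$, where $\tn_0$ is the first hitting time of $0$, and uses an $\varepsilon/3$ argument to justify interchanging the limits $N\to\infty$ and $K\to\infty$ in $P\bigl(\tn_0<K\bigr)$; the crucial term $P\bigl(K\leq \tn_0<\infty\bigr)$ is split according to whether $\max_{t<K}\xn_t$ exceeds an auxiliary level $m$, and controlled by exactly your two mechanisms (geometric decay of the dwelling time below $m$ via the one-step jump-to-$0$ bound, and the Moran lower bound once level $m$ is reached). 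Your level-$M$ stopping packages these two mechanisms into a single sandwich inequality, which is arguably tidier, but the two arguments are equivalent in substance.
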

Theorem~\ref{newt} implies in particular that just one advantageous mutant can invade an infinite population. A similar result for the frequency-dependent Moran
model of \cite{nature04a,nowakmoran} has been obtained in \cite{antalsm}.
\par
The proof of Theorem~\ref{newt} is based on the approximation of the Wright-Fisher model by a branching process with
a Poisson distribution of offspring. The idea to study the fixation probability of a Wright-Fisher model using a branching process approximation goes back to at least
Fisher \cite{fisher} and Haldane \cite{haldane}. Typically, this approximation scheme is exploited using heuristic or numerical arguments \cite{ewens,gale}.
The proof of Theorem~\ref{newt} given in Section~\ref{proof-newt} is rigorous. A small but essential part of the formal argument is the use of  \textit{a priori} estimates
provided by Theorem~\ref{main1}.
\par
Once it has been established that a single advantageous mutant has a non-zero probability of extinction,
it is natural to ask how long extinction takes, if at all. This question is addressed in the following result.
\begin{theorem}
\label{main7}
Suppose that Assumption~\ref{assume1} is satisfied. Let $\lambda$ and $q$ be as defined in the statement of Theorem~\ref{newt}, and introduce
\beqn
\label{s74}
s_1=\frac{4-\lambda^2q^2}{\lambda q} \qquad \mbox{and} \qquad s_2=\frac{\lambda e^{-\lambda }}{\lambda q+e^{-\lambda q}-1}.
\feqn
Then there exist a constant $C_0>0$ and a function $\theta: (1,\infty)\to (0,\infty)$ that depend only on the payoff matrix $(a,b,c,d)$ and the selection parameter $w,$ such that the following holds true for any real $\eta>1,$ $k,m\in\nn,$ and integers $N\geq N_0,$ $J\in \Omega_{_N}^o:$
\beq
P(T\leq m|\xn_0=k)&\leq& \Bigl(\frac{qs_2(1-\lambda^mq^m)}{s_2-\lambda^mq^m}\Bigr)^k+e^{\theta(\eta) \lambda^{-m}(k\eta^m\lambda^m -N)}
\\
&&
\quad
+m C_0\frac{J^{3/2}}{N}+ e^{\theta(\eta) \lambda^{-m}(k\eta^m\lambda^m -J)}
\feq
and
\beq
P(T\leq m|\xn_0=k)&\geq& \Bigl(\frac{qs_1(1-\lambda^mq^m)}{s_1-\lambda^mq^m}\Bigr)^k
\\
&&
\quad
-m C_0\frac{J^{3/2}}{N}- e^{\theta(\eta) \lambda^{-m}(k\eta^m\lambda^m -J)}.
\feq
\end{theorem}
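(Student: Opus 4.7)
The plan is to approximate the Wright-Fisher chain by the Galton-Watson branching process $Z_t$ with $\mathrm{Poisson}(\lambda)$ offspring distribution (the same comparison underlying Theorem~\ref{newt}), valid so long as the $A$-population stays below a threshold $J\ll N$. Since absorption is irreversible, $\{T\leq m\}=\{\xn_m\in\{0,N\}\}$, and the estimate decomposes as $P(T\leq m|\xn_0=k) = P(\xn_m=0|\xn_0=k) + P(\xn_m=N|\xn_0=k)$, with the first summand compared against the branching-process extinction probability and the second handled by an exponential Chernoff argument.

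For the Galton-Watson process with PGF $f(z) = e^{-\lambda(1-z)}$, the extinction probability by generation $m$ starting from $k$ is $(f^{(m)}(0))^k$. Since $f(q)=q$ and $f'(q)=\lambda q \in (0,1)$, one can sandwich $f$ on $[0,q]$ between two linear fractional transformations of the form $g_s(z) = q - \mu(q-z)/(1+\beta_s(q-z))$ with $\mu=\lambda q$, each fixing $q$ with derivative $\mu$ there. These iterate in closed form to $g_s^{(m)}(0) = qs(1-\lambda^m q^m)/(s-\lambda^m q^m)$, and the values $s_1,s_2$ of \eqref{s74} are the constants pinned down by matching $f$ at $z=0$ (using $f(0)=e^{-\lambda}$, giving $s_2$) and by the second-order contact condition at $z=q$ (using $f''(q)=\lambda^2 q$, giving $s_1$), yielding $(g_{s_1}^{(m)}(0))^k \leq P(Z_m=0|Z_0=k) \leq (g_{s_2}^{(m)}(0))^k$.

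Next, couple $\xn_t$ and $Z_t$ (both started at $k$) step by step using an optimal coupling of $\mathrm{Bin}(N,\xi_{_N}(i))$ and $\mathrm{Poisson}(\lambda i)$ at each $i=\xn_t\leq J$. For such $i$, the total-variation distance is $O(i^{3/2}/N)$, combining the Le Cam $\mathrm{Bin}$-$\mathrm{Poisson}$ bound with the expansion $N\xi_{_N}(i)-i\lambda=O(i^2/N)$ forced by the linearity of $f_{_N},g_{_N}$ in $i$. Letting $\tau_J=\min\{t:\xn_t\geq J\}$, a union bound gives $P(\xn_m\neq Z_m)\leq m C_0 J^{3/2}/N + P(\tau_J\leq m)$. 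To bound $P(\tau_J\leq m)$ and $P(\xn_m=N|\xn_0=k)$, fix $\eta>1$ and use the identity $E(e^{\theta\xn_{t+1}}|\xn_t=i)=(1+\xi_{_N}(i)(e^\theta-1))^N\leq e^{N\xi_{_N}(i)(e^\theta-1)}$ together with $e^\theta-1\leq \eta\theta$ for $\theta\leq\theta(\eta)$ and $N\xi_{_N}(i)\leq\lambda\eta\, i$, the latter valid on $i\leq J$ and $N$ large by the expansion of $f_{_N}/g_{_N}$ near the origin. A backwards-propagating Chernoff telescope starting from $\theta=\theta(\eta)\lambda^{-m}$ at the final step then yields tail bounds of the precise form $e^{\theta(\eta)\lambda^{-m}(k\eta^m\lambda^m-J)}$, and after relaxing the restriction on $i$, the analogous bound with $J$ replaced by $N$.

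Assembly: the upper bound on $P(T\leq m)$ combines the $s_2$ main term with the tail bound involving $N$ (covering $P(\xn_m=N)$), plus the coupling error $mC_0J^{3/2}/N$ and the tail bound involving $J$ (dominating $P(\xn_m\neq Z_m)$ in $P(\xn_m=0)\leq P(Z_m=0)+P(\xn_m\neq Z_m)$); the lower bound uses $P(T\leq m)\geq P(\xn_m=0)\geq P(Z_m=0)-P(\xn_m\neq Z_m)$ with the $s_1$ main term on $P(Z_m=0)$. The main obstacle is calibrating the exponential tail step so that the telescoped Chernoff estimate yields the specific exponent $\theta(\eta)\lambda^{-m}(k\eta^m\lambda^m-J)$ rather than a coarser form, while keeping $N\xi_{_N}(i)\leq\lambda\eta\, i$ in force throughout the relevant range of $i$; a secondary subtlety is verifying the one-sided sandwich $g_{s_1}\leq f\leq g_{s_2}$ on all of $[0,q]$ with the precise constants of \eqref{s74}, via direct sign analysis of $f(z)-g_s(z)$ forced by the second-order matching data.
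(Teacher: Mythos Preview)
Your overall architecture matches the paper's: couple $\xn$ to the Poisson$(\lambda)$ Galton--Watson process $Z$, control the one-step coupling error by $C_0 i^{3/2}/N$, invoke Agresti-type linear-fractional bounds for $P(Z_m=0\,|\,Z_0=k)$ to produce the $s_1,s_2$ main terms, and use an exponential maximal inequality to control the probability that the process exceeds a threshold $J$ (respectively $N$). The paper simply cites Agresti for the $s_1,s_2$ bounds rather than re-deriving them, but your sketch of the linear-fractional sandwich is the same idea.

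The one substantive divergence is \emph{where} you run the exponential tail bound, and this creates a gap. The paper applies Doob's maximal inequality to the \emph{branching process}: with $U_t=\lambda^{-t}Z_t$ a martingale and the Poisson generating function exact, one gets
\[
E\bigl(e^{\theta\lambda^{-m}Z_m}\,\big|\,Z_{m-1}\bigr)=\exp\bigl(\lambda Z_{m-1}(e^{\theta\lambda^{-m}}-1)\bigr)\le \exp\bigl(\eta\theta\lambda^{-(m-1)}Z_{m-1}\bigr),
\]
needing only the single slack $e^x-1\le \eta x$ for $x\le \theta\lambda^{-1}$; iterating gives precisely the exponent $\theta(\eta)\lambda^{-m}(k\eta^m\lambda^m-J)$, with $\theta(\eta)$ determined by that inequality alone (this is exactly Remark~\ref{inorder}(ii)). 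Because the bound is on $W_m=\max_{t\le m}Z_t$, it serves simultaneously for the threshold $J$ (controlling decoupling) and for $N$ (controlling $\sigma_{_N}$).

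You instead run Chernoff on the Wright--Fisher chain, which forces a second approximation $N\xi_{_N}(i)\le \lambda\eta\, i$ on top of $e^\theta-1\le\eta\theta$. Two problems follow. First, stacking the two slacks gives a factor $\lambda\eta^2$ per step, hence $\eta^{2m}$ rather than $\eta^m$ in the final exponent; renaming $\eta\mapsto\sqrt\eta$ does not rescue this because $\theta(\eta)$ in the theorem is pinned down by the single inequality above. Second, and more seriously, the bound $N\xi_{_N}(i)\le \lambda\eta\, i$ does \emph{not} hold uniformly in $N\ge N_0$ and $i\in\Omega_{_N}^o$ for every $\eta>1$: for small $i/N$ one has $N\xi_{_N}(i)\approx i\,f_{_N}(i)/g_{_N}(i)$, and $f_{_N}(i)/g_{_N}(i)$ ranges up to $1/\alpha$, which can exceed $\lambda\eta$ when $\eta$ is close to $1$. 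Your qualifier ``valid on $i\le J$ and $N$ large'' is exactly the issue---the theorem is claimed for all $N\ge N_0$ and all $\eta>1$. The fix is the paper's: perform the maximal/Chernoff estimate on $Z_t$ rather than on $\xn_t$; on the event $\{\tau_{_N}>m\}$ the two paths coincide anyway, so bounding $W_m$ for the branching process is all that is needed.
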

\begin{remark}
\label{inorder}
A few remarks are in order.
\begin{itemize}
\item[(i)] An explicit upper bound for $C_0$ can be derived from \eqref{dif} and \eqref{diff}.
\item[(ii)] One can set $\theta(\eta)=\min\bigl\{\theta>0:e^x-1\leq \eta x~\mbox{\rm for all}~x\in [0,\theta\lambda^{-1}]\bigr\}.$ This can be seen from the
proof of Lemma~\ref{lemma1} below.
\item[(iii)] The identity $q=e^{-\lambda(1-q)}$ implies $\lambda q<1$ because $e^{-q^{-1}(1-q)}<q$ for any $q\in (0,1)$ and $e^{-\lambda(1-q)}$
is a decreasing function of $\lambda.$ In particular, for $i=1,2,$ we have
\beq
\lim_{m\to\infty} \Bigl(\frac{qs_1(1-\lambda^mq^m)}{s_1-\lambda^mq^m}\Bigr)^k=q^k,
\feq
which is, according to Theorem~\ref{newt} and \eqref{accord}, equivalent to
\beq
1-\lim_{N\to\infty} p_{_N}(k)&=&\lim_{N\to\infty}\lim_{m\to\infty} P(T\leq m, \xn_T=0|\xn_0=k)
\\
&=&
\lim_{m\to\infty}\lim_{N\to\infty} P(T\leq m, \xn_T=0|\xn_0=k).
\feq
On the other hand, a suitable adaptation of the heuristic argument given in Section~6.3.1 of \cite{dgbook} for a Moran model suggests
that
\beq
\lim_{N\to\infty} P(T\leq c\log N, \xn_T=N|\xn_0=k)=0
\feq
as long as $c<C_1$ for some threshold constant $C_1>0.$ If this heuristic is correct then the bounds given in the theorem
are tight for large values of $m$ and $N$ as long as we maintain $m<c \log N$ for some $c<C_1.$
\item[(iv)] The contribution of the correction term $e^{\theta(\eta) \lambda^{-m}(k\eta^m\lambda^m -J)}$ is small for large values of $N$ if,
for instance, one sets $J=N^\alpha$ for some positive real $\alpha<2/3$ and maintain $k\eta^m\lambda^m <cJ$ for some constant $c\in (0,1).$
\end{itemize}
\end{remark}
The proof of Theorem~\ref{main7} is given in Section~\ref{proof-main7}. The main ingredient of the proof is the branching process approximation which
confirms that the first $m$ steps of the Wright-Fisher model look with a high probability like the first $m$ steps of a branching process
with Poisson distribution of offspring. The first steps are the most important ones since
there is little randomness involved in the dynamics of the process for intermediate values of $i,$ where almost deterministically
$\xn_{t+1}=\xi_{_N}\bigl(\xn_t\bigr)>(1+\veps)\xn_t$ for a small $\veps>0$ (Chernoff-Hoeffding bounds for a binomial distribution \cite{Hoeffding} can be used to verify this). Compare also with the three phases of the fixation process described in detail in Section~6.3.1 of \cite{dgbook}.
 To estimate the error of the approximation we use an optimal (so called {\it maximal}) coupling of binomial and Poisson distributions and classical bounds on the total variation distance between the two distributions. Finally, to evaluate the extinction time distribution of the branching process
we use bounds of \cite{agresti74} obtained through the comparison of a Poisson branching process to a branching process with a fractional linear generating function of offspring. We remark that in the context of biological applications, the approximation of an evolutionary process by a branching process with a fractional linear generating function of offspring was apparently first considered in \cite{fractional_bp}.

\subsection{Numerical example}
Consider the following payoff matrix:
\beq
\begin{tabular}{ l | c  r }
     & A & B \\ \hline
    A & 4 & 2 \\
    B & 3 & 1 \\
  \end{tabular}
\feq
Theorem \ref{newt} indicates a very limited influence of the population size $N$ on the fixation probability $p_{_N}(i)$ for large values of $N.$
For illustration purposes we consider a fixed population size $N=100$ and let the selection parameter vary between $0$ and $1.$  Figure \ref{fprob} shows a comparison of numerical and analytical results. The blue line represents the analytically obtained limiting fixation probability $p_\infty:=\lim_{N\to\infty} p_{_N}(1)=1-q$ as a function of the selection parameter $w,$ while the black dots are numerically obtained fixation probabilities of one advantageous mutant for $N=100.$
\begin{figure}[H]
\begin{center}
\includegraphics[height=7cm]{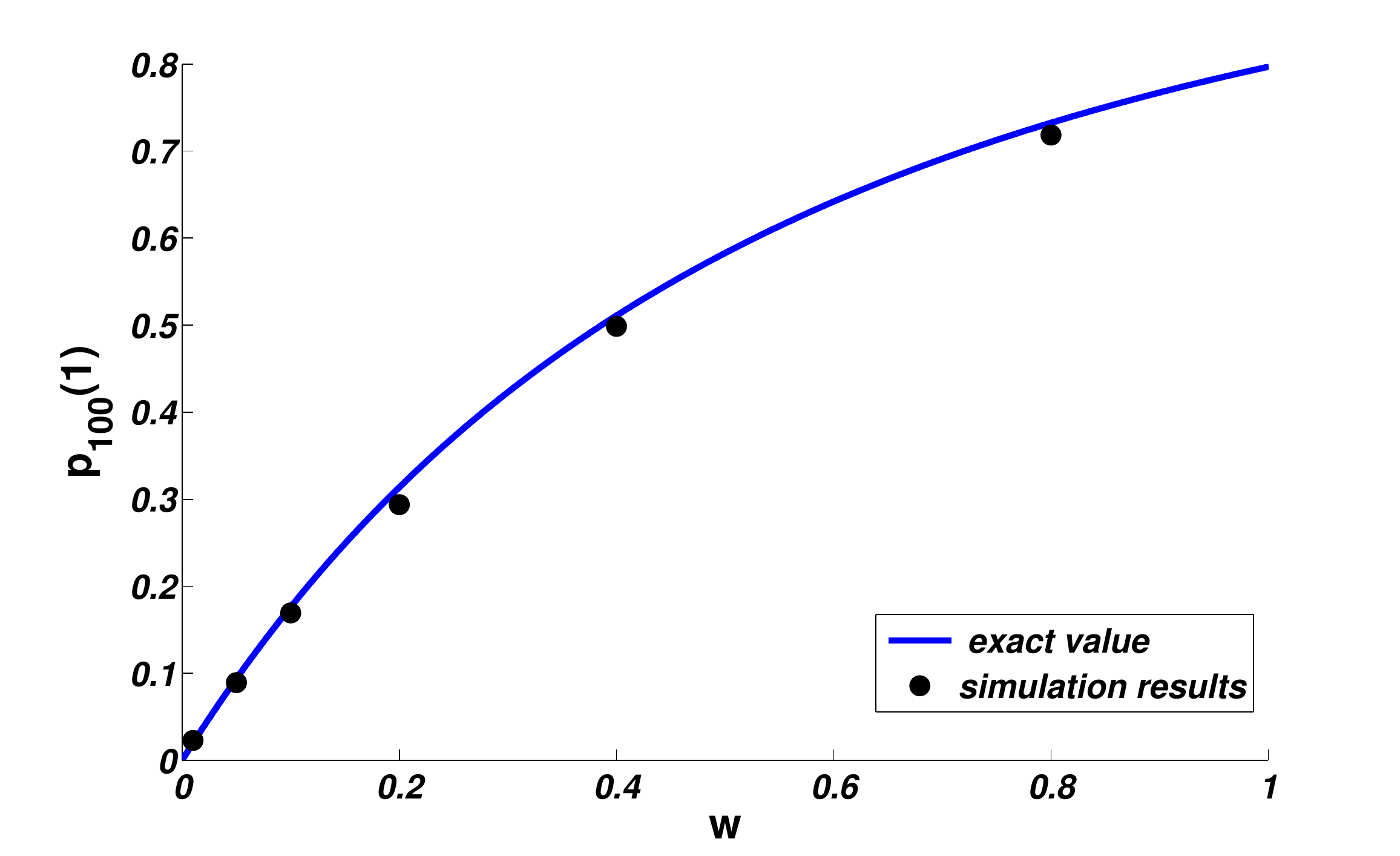}
\end{center}
\caption{The solid blue line represents the fixation probability $p_\infty(1)=1-q$ of a single advantageous mutant in the infinite population limit as a function of the selection parameter $w.$ The black dots represent numerically obtained fixation probabilities $p_{100}(1)$ for six different values of $w.$ Numerical results are obtained by observing $10^4$ realizations of the Markov chain \eqref{wf} with the above specified parameters.}
\label{fprob}
\end{figure}
\par
We also performed numerical simulations for the fixed selection parameter $w=0.3$ and the initial population size varying through $i=1,2,...,10.$ The results of these simulations are shown in Table~\ref{tab1}. In the case of large populations, Theorem \ref{newt} suggests that the fixation probability at zero is given by $q=0.5770.$ We numerically obtained the fixation probabilities $p_{_N}(i)$ for the above specified parameters and used a nonlinear least squares routine in MATLAB to find the best fitting $q_{_N}$ assuming that $p_{_N}(i)=1-q_{_N}^i.$ Table \ref{tab1} shows the results of this nonlinear fitting $q_{_N}$ and the differences $q_{_N}-q$ for the specified values of the population sizes $N.$
\begin{figure}[H]
\centering
\includegraphics[height=12cm]{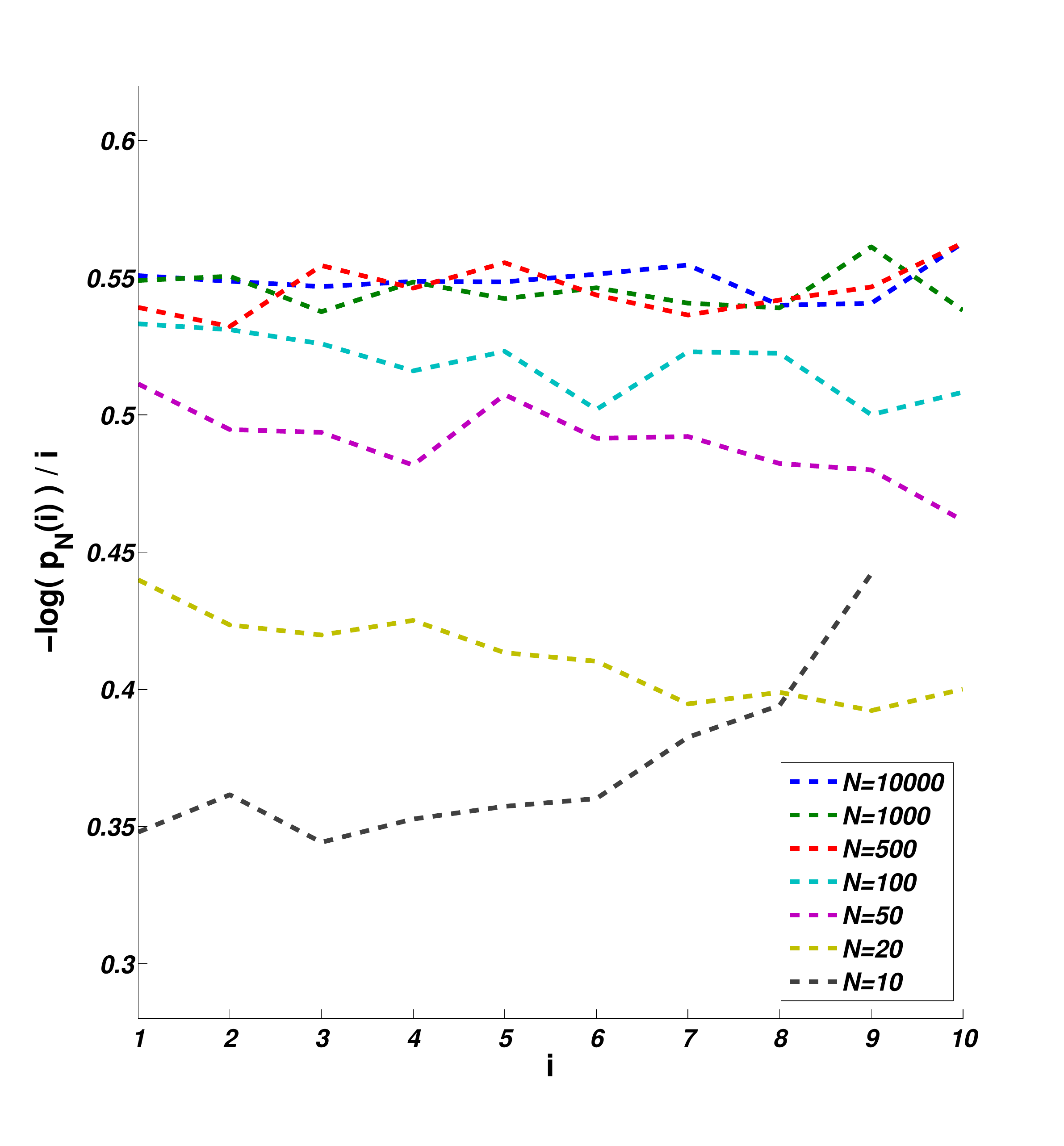}
\caption{Plot of the function $-\frac{1}{i}\log p_{_N}(i)$ determined in simulations for
various values of $N$ and several small values of the initial state $i.$}
\label{logplot}
\end{figure}

\begin{table}[h!]
\begin{center}
\begin{tabular}{llllllll}
$N$     & 10     & 20     & 50     & 100    & 500    & 1000   & 10000  \\
$q_{_N}$   & 0.6979 & 0.6567 & 0.6090 & 0.5909 & 0.5812 & 0.5792 & 0.5776 \\
$q_{_N}-q$ & 0.1209 & 0.0797 & 0.0320 & 0.0139 & 0.0042 & 0.0022 & 0.0006\\
\end{tabular}\\
\end{center}
\caption{$q_{_N}$ is the value obtained from the nonlinear least squares fitting of numerically obtained fixation probabilities starting with $i=1,2,...,10$ individuals. Up to N=1000, we realized the Markov chain \eqref{wf} $10^4$ times and for $N=10000$ we used $5\times 10^4$ realizations.}
\label{tab1}
\end{table}
\section{Conclusion}
\label{conclusion}
In this paper, we considered the fixation probability of symmetric games in Wright-Fisher processes with directional selection describing evolutionary dynamics of two types. Our analysis shows the existence of exponential lower and upper bounds for the fixation probabilities for any population size $N\in\nn.$ Using these facts one can draw the following biological conclusions.
\begin{enumerate}

 \item The fixation probabilities of an advantageous or a deleterious mutant in a population of size $N$ depend on both population size and the relative fitnesses of the  phenotypes.

\item In the case of advantageous mutants, the dependence on the population size is weak, i.e. the lower bound on the fixation probability is bounded below by a positive constant depending only on the fitness of the two phenotypes.

\item The fixation probability $q_{_N}(i)$ of $i$ deleterious mutants is an exponentially decreasing function of $N.$

\end{enumerate}

In addition, we studied the asymptotics of the fixation probability as the population size goes to infinity.
We showed that
\begin{itemize}
\item[4.] A single advantageous mutant can invade an infinite population with positive probability.

\item[5.] Whenever the initial population $\xn_0$ of advantageous players is unbounded as $N$ goes to infinity (even if its proportion $\xn_0/N$ vanishes to zero asymptotically), the fixation probability $p_{_N}(i)$ is asymptotically zero for deleterious players and one for advantageous players.

\end{itemize}

\section{Proofs}
\label{proofs}
This section, divided into five subsections, contains proofs of the results stated in Section~\ref{results}.
The proof of Lemma~\ref{lem2} is given in Section~\ref{proof-lem2}. The proof of Theorem~\ref{main3}
is included in Section~\ref{proof-main3}. Section~\ref{proof-newt} is devoted to the proof of Theorem~\ref{newt}.
Finally, the proof of Theorem~\ref{main7} is given in Section~\ref{proof-main7}.
\subsection{Proof of Lemma \ref{lem2}}
\label{proof-lem2}
First, observe that if $X$ is a binomial random variable $BIN(N,\xi),$ then for any constant $\rho\in\RR,$
\beqn
\label{gf-bin}
E(\rho^X)&=&\sum_{k=0}^N \rho^k{N \choose k}\xi^k(1-\xi)^{N-k}=(\xi\rho+1-\xi)^N.
\feqn
Thus, in order to prove part (a) of the lemma we need to show that the following inequality
holds for some $\rho\in(0,1)$ and all $N\geq N_0$ and $i\in\Omega_{_N}^o:$
\beq
\bigl(\rho\xi_{_N}(i)+1-\xi_{_N}(i)\bigr)^N\leq \rho^i.
\feq
Using the notation $x=i/N,$ the above inequality can be rewritten as
\beq
1-(1-\rho)\xi_{_N}(Nx)\leq \rho^x .
\feq
It follows from \eqref{xin} and Assumption~\ref{assume1} that $\xi_{_N}(Nx)\geq \frac{x}{x+(1-x)\gamma}.$
Thus, it suffices to show that for some constant $\rho\in(0,1)$ and all $x\in (0,1),$
\beqn
\label{mref}
\frac{x}{x+(1-x)\gamma} -\frac{1-\rho^x}{1-\rho}\geq 0.
\feqn
$\mbox{}$
\\
Similarly, in order to prove part (b) of the lemma it is sufficient to show that for some constant $\theta\in(0,1)$ and all $x\in (0,1),$
\beqn
\label{gin}
\frac{x}{x+(1-x)\alpha} -\frac{1-\theta^x}{1-\theta}\leq 0.
\feqn
Inequalities \eqref{mref} and \eqref{gin} have been analyzed in a similar context by Moran \cite{mbounds} (see specifically the bottom of p.~488 in \cite{mbounds})
who found the feasible solutions given in \eqref{parameters}. The proof of the lemma is complete.\qed
\subsection{Proof of Theorem~\ref{main3}}
\label{proof-main3}
The proof relies on a standard coupling argument. Fix any $N\geq N_0.$ It follows from Assumption~\ref{assume1} that
\beqn
\label{mono}
\eta_{_N}(i)\leq \xi_{_N}(i)\leq \zeta_{_N}(i),\quad \forall~i\in\Omega_{_N}^o.
\feqn
Let $\bigl(U_{t,k}\bigr)_{t\in \zz_+,k\in\nn}$ be a sequence of independent random variables, each one distributed
uniformly on the interval $(0,1).$ Using the interpretation of the binomial random variable as a superposition of independent
Bernoulli trials, the Markov chain $\bigl(\xna_t,\xnb_t,\xnc_t\bigr)_{t\in \zz_+}$ can be constructed inductively in the following manner.
For each $t\in \zz_+,$ given \\ $\bigl(\xna_t,\xnb_t,\xnc_t\bigr),$
define Bernoulli random variables $\bigl(b^{(1)}_{t,k},b^{(2)}_{t,k},b^{(3)}_{t,k} \bigr)_{1\leq k\leq N}$ as follows:
\beq
b^{(t)}_{k,1}&=&
\left\{
\begin{array}{lll}
1&\mbox{\rm if}&U_{t,k}\leq \eta_{_N}(\xna_t) \\
0&\mbox{\rm if}&U_{t,k}> \eta_{_N}(\xna_t),
\end{array}
\right.
\\
b^{(t)}_{k,2}&=&
\left\{
\begin{array}{lll}
1&\mbox{\rm if}&U_{t,k}\leq \xi_{_N}(\xnb_t) \\
0&\mbox{\rm if}&U_{t,k}> \xi_{_N}(\xnb_t),
\end{array}
\right.
\\
b^{(t)}_{k,3}&=&
\left\{
\begin{array}{lll}
1&\mbox{\rm if}&U_{t,k}\leq \zeta_{_N}(\xnc_t) \\
0&\mbox{\rm if}&U_{t,k}> \zeta_{_N}(\xnc_t),
\end{array}
\right.
\feq
and set
\beqn
\label{abc}
\xna_{t+1}=\sum_{k=1}^N b^{(t)}_{k,1},\quad \xnb_{t+1}=\sum_{k=1}^N b^{(t)}_{k,2},\quad \xnc_{t+1}=\sum_{k=1}^N b^{(t)}_{k,3}.
\feqn
It follows from \eqref{mono} and the fact that both $\eta_{_N}(i)$ and $\zeta_{_N}(i)$ are monotone increasing functions of $i,$ that the inequality
$\xna_t\leq \xnb_t\leq \xnc_t$ implies that $\xi_{_N}(\xnb_t)\geq \eta_{_N}(\xnb_t)\geq$ $\eta_{_N}(\xna_t)$ and $\xi_{_N}(\xnb_t)\leq \zeta_{_N}(\xnb_t)\leq \zeta_{_N}(\xnc_t),$ and hence $\eta_{_N}(\xna_t)\leq \xi_{_N}(\xnb_t)\leq$ $\zeta_{_N}(\xnc_t).$ By virtue of \eqref{abc},
the latter inequalities along with $\xna_t\leq \xnb_t\leq \xnc_t$ imply $\xna_{t+1}\leq \xnb_{t+1}\leq \xnc_{t+1},$ and the claim follows by induction on $t.$
\qed
\subsection{Proofs of Proposition~\ref{propo1} and Corollary~\ref{coro1}}
\label{propo1-proof}
We start with the proof of the proposition.
\begin{proof}[Proof of Proposition~\ref{propo1}] $\mbox{}$ \\
{\bf (i)} First, observe that
\beq
\xi_{_N}(i)=\frac{i}{i+(N-i)\frac{g_{_N}(i)}{f_{_N}(i)}}<\xi_{_N}(i+1)=\frac{i+1}{i+1+(N-i-1)\frac{g_{_N}(i+1)}{f_{_N}(i+1)}}
\feq
if and only if
\beqn
\label{eval}
\frac{g_{_N}(i+1)}{g_{_N}(i)}\cdot \frac{f_{_N}(i)}{f_{_N}(i+1)}<\frac{i+1}{i}\cdot \frac{N-i}{N-i-1}.
\feqn
To evaluate the left-hand side of the inequality in \eqref{eval} we will use the following simple fact.
\begin{lemma}
\label{maxi}
For any strictly positive reals $A,B,C,D,$
\beq
\frac{A+B}{C+D}\leq \max\Bigl\{\frac{A}{C},\frac{B}{D}\Bigr\}.
\feq
Furthermore, the equality holds if and only if $\frac{A}{C}=\frac{B}{D}.$
\end{lemma}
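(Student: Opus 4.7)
The statement is the classical mediant inequality for positive reals, so the proposal is straightforward. The plan is to reduce to a single cross-multiplication after a symmetry reduction.

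Without loss of generality assume $\frac{A}{C}\geq \frac{B}{D}$; the reverse case is handled by swapping the roles of the pairs $(A,C)$ and $(B,D)$, which leaves both sides of the claimed inequality unchanged. Since $C,D>0$, the hypothesis $\frac{A}{C}\geq \frac{B}{D}$ is equivalent to $AD\geq BC$. Adding $AC$ to both sides produces $A(C+D)\geq C(A+B)$. Because $C(C+D)>0$, dividing through yields
\[
\frac{A+B}{C+D}\leq \frac{A}{C}=\max\Bigl\{\frac{A}{C},\frac{B}{D}\Bigr\},
\]
which is the desired inequality.

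For the equality clause, the only inequality used in the chain above is $AD\geq BC$, so equality in the mediant bound holds if and only if $AD=BC$, i.e., $\frac{A}{C}=\frac{B}{D}$. Conversely, if $\frac{A}{C}=\frac{B}{D}=r$, then $A=rC$ and $B=rD$, so $\frac{A+B}{C+D}=r=\max\{A/C,B/D\}$, establishing the converse.

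There is no genuine obstacle here; the whole argument is one cross-multiplication plus a WLOG reduction. The only mild subtlety worth flagging is that the strict positivity of $C$ and $D$ is essential both for the cross-multiplication step and for the division at the end, which is why the lemma is stated for strictly positive reals rather than non-negative ones.
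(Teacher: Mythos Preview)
Your proof is correct. The paper states this lemma as a ``simple fact'' without supplying a proof, so there is nothing to compare against; your cross-multiplication argument after the WLOG reduction is the standard way to handle the mediant inequality and fully justifies both the bound and the equality characterization.
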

It follows from the lemma that
\beqn
\label{chain}
\begin{split}
&
\frac{g_{_N}(i+1)}{g_{_N}(i)}\cdot \frac{f_{_N}(i)}{f_{_N}(i+1)}\leq
\\
&
\qquad
\leq \max\Bigl\{1,\frac{\pi_{_B}(i+1,N)}{\pi_{_B}(i,N)}\Bigr\}\cdot \max\Bigl\{1,\frac{\pi_{_A}(i,N)}{\pi_{_A}(i+1,N)}\Bigr\}
\\
&
\qquad
\leq \max\Bigl\{1,\frac{i+1}{i},\frac{N-i-2}{N-i-1}\Bigr\}\cdot \max\Bigl\{1,\frac{i-1}{i},\frac{N-i}{N-i-1}\Bigr\}
\\
&
\qquad
\leq \frac{i+1}{i}\cdot \frac{N-i}{N-i-1}.
\end{split}
\feqn
Thus
\beqn
\label{strict}
\frac{g_{_N}(i+1)}{g_{_N}(i)}\cdot \frac{f_{_N}(i)}{f_{_N}(i+1)}\leq \frac{i+1}{i}\cdot \frac{N-i}{N-i-1}
\feqn
Furthermore, the equality is only possible if the equality holds everywhere in the chain of inequalities \eqref{chain}.
However, by the lemma, if the equality holds everywhere in \eqref{chain}, then, in particular,
\beq
1=\frac{\pi_{_B}(i+1,N)}{\pi_{_B}(i,N)}=\frac{\pi_{_A}(i,N)}{\pi_{_A}(i+1,N)},
\feq
in which case $\frac{g_{_N}(i+1)}{g_{_N}(i)}\cdot \frac{f_{_N}(i)}{f_{_N}(i+1)}=1.$ The contradiction shows that the inequality
in \eqref{strict} is strict, which completes the proof of part (i) of the proposition.
\\
$\mbox{}$
\\
{\bf (ii)} We remark in passing that part (ii) of the proposition can be proved using a coupling of the conditional distributions
$P\bigl(\xn_{t+1}\leq \,\cdot\,\bigl|\xn_t=i\bigr)$ and $P\bigl(\xn_{t+1}\leq \,\cdot\,\bigl|\xn_t=j\bigr),$ similar to the one which we have exploited
in the proof of Theorem~\ref{main3}. Alternatively, and more in the spirit of \cite{mbounds}, in view of the result in part (i), in order to establish
the claim in part (ii) it suffices to verify that $\frac{d}{dx} T_{k,N}(x)<0$ for
$x\in (0,1),$ where $T_{k,N}(x):=\sum_{i=0}^{k}{N \choose i} x^i(1-x)^{N-i}.$ To this end, write
\beq
&&
\frac{d}{dx} T_{k,N}(x)
\\
&&
\quad
=\sum_{i=1}^{k}{N \choose i} ix^{i-1}(1-x)^{N-i}-\sum_{i=0}^k{N \choose i} (N-i)x^i(1-x)^{N-i-1}
\\
&&
\quad
=N\sum_{i=1}^{k}{N-1 \choose i-1} x^{i-1}(1-x)^{N-i}-N\sum_{i=0}^k{N-1 \choose i} x^i(1-x)^{N-i-1}.
\feq
Changing variable $i$ to $j=i-1$ in the first sum that appears in the last line above, we obtain
\beq
&&
\frac{d}{dx} T_{k,N}(x)
\\
&&
\quad
=N\sum_{j=0}^{k-1}{N-1 \choose j} x^j(1-x)^{N-j-1}-N\sum_{i=0}^k{N-1 \choose i} x^i(1-x)^{N-i-1}
\\
&&
=-N{N-1 \choose k} x^k(1-x)^{N-k-1}<0.
\feq
The proof of the proposition is complete.
\end{proof}
We proceed with the proof of the corollary.
\begin{proof}[Proof of Corollary~\ref{coro1}]
For $i,j\in\Omega_{_N}$ let $\qn_{i,j}=P\bigl(\xn_{t+1}=j\bigl|\xn_t=i\bigr).$ Then, by the Markov property,
\beq
p_{_N}(i)=\sum_{j=1}^{N-1}\qn_{i,j}p_{_N}(j)+\qn_{i,N},\qquad \forall~i\in\Omega_{_N}^o.
\feq
Given $N$ and $i,j,$ consider $\qn_{i,j}$ as a function $\RR^5\to \RR$ of the five independent variables $a,b,c,d,$ and $w.$  The existence of the partial derivatives of $p_{_N}(i)$ with respect to these variables follows from the implicit function theorem applied to the function $f=(f_1,\ldots,f_{N-1}):$ $\RR^{5+N-1}\to \RR^{N-1},$ where
\beq
f_i\bigl(a,b,c,d,w,p_{_N}(1),\ldots,p_{_N}(N-1)\bigr):=p_{_N}(i)-\sum_{j=1}^{N-1}\qn_{i,j}p_{_N}(j)-\qn_{i,N}.
\feq
The monotonicity of $p_{_N}(i)$ on each of the parameters $a,b,c,d,w,$ and $i$ follows then from the corresponding monotonicity
of $\xi_{_N}(i)$ and the following version of O'Brien's results in \cite{monotone4}:
\begin{proposition}
\label{main3l}
Let $\overrightarrow{\eta_{_N}}=(\eta_i)_{0=1}^N$ and $\vxin=(\xi_i)_{i=0}^N$ be two vectors in $\RR^{N+1}$ such that
\begin{itemize}
\item [(i)] $\eta_0=\xi_0=0,$ $\eta_N=\xi_N=1,$ and $0<\eta_i\leq \xi_i<1$ for any $i\in\Omega_{_N}^o.$
\item [(ii)] Either $\eta_i\leq \eta_{i+1}$ for all $i\in\Omega_{_N}^o$ or $\xi_i\leq \xi_{i+1}$ for all $i\in\Omega_{_N}^o.$
\end{itemize}
Then for any $i,j\in\Omega_{_N}^o,$ $i\leq j,$ there exists a Markov chain $(\xna_t,\xnb_t)_{t\in \zz_+}$ on $\Omega_{_N}\times\Omega_{_N}$
with the following properties:
\begin{enumerate}
\item $(\xna_t)_{t\in \zz_+}$ is an $(N,\overrightarrow{\eta_{_N}})$-binomial process.
\item $(\xnb_t)_{t\in \zz_+}$ is an $\bigl(N,\vxin\bigr)$-binomial process.
\item With probability one, $\xna_0=i,$ $\xnb_0=j,$ and $\xna_t\leq \xnb_t$ for $t\in \zz_+.$
\end{enumerate}
\end{proposition}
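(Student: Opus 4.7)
The plan is to adapt the direct coupling argument used in the proof of Theorem~\ref{main3} to the slightly more general setting of the proposition, paying particular attention to the fact that hypothesis (ii) only requires monotonicity of \emph{one} of the two envelope sequences. Concretely, I will let $(U_{t,k})_{t\in\zz_+,\,1\leq k\leq N}$ be an i.i.d.\ collection of uniform random variables on $(0,1),$ set $\xna_0=i$ and $\xnb_0=j,$ and define the pair inductively by
\beq
\xna_{t+1}=\sum_{k=1}^N \ind{U_{t,k}\leq \eta_{\xna_t}},\qquad
\xnb_{t+1}=\sum_{k=1}^N \ind{U_{t,k}\leq \xi_{\xnb_t}}.
\feq
Marginally, by the standard representation of a binomial as a sum of independent Bernoulli trials, $(\xna_t)_{t\in\zz_+}$ and $(\xnb_t)_{t\in\zz_+}$ will then be $(N,\overrightarrow{\eta_{_N}})$- and $(N,\vxin)$-binomial processes respectively, and the boundary values $\eta_0=\xi_0=0,$ $\eta_N=\xi_N=1$ ensure the correct absorbing behavior at $0$ and $N.$

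The remaining work will be to verify the pathwise inequality $\xna_t\leq \xnb_t$ by induction on $t,$ with the base case supplied by $i\leq j.$ Assuming $\xna_t\leq \xnb_t,$ since the event $\{U_{t,k}\leq s\}$ is monotone in $s\in (0,1),$ to get $\xna_{t+1}\leq \xnb_{t+1}$ termwise it suffices to show the sandwich inequality $\eta_{\xna_t}\leq \xi_{\xnb_t}.$ This is where hypothesis (ii) enters in a case-splitting manner: if $\eta_i\leq \eta_{i+1}$ for all $i,$ then $\eta_{\xna_t}\leq \eta_{\xnb_t}$ by monotonicity and $\eta_{\xnb_t}\leq \xi_{\xnb_t}$ by (i); if instead $\xi_i\leq \xi_{i+1}$ for all $i,$ then $\eta_{\xna_t}\leq \xi_{\xna_t}$ by (i) and $\xi_{\xna_t}\leq \xi_{\xnb_t}$ by monotonicity. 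Either way the sandwich holds, so $\ind{U_{t,k}\leq \eta_{\xna_t}}\leq \ind{U_{t,k}\leq \xi_{\xnb_t}}$ for every $k,$ which closes the induction.

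I do not anticipate a serious obstacle: the argument is a routine monotone coupling, directly parallel to the one for Theorem~\ref{main3}. The only mildly subtle feature is that hypothesis (ii) is disjunctive, so both alternatives must be treated separately, each closing the chain of inequalities via a different application of (i). The construction also clarifies the asymmetric role of the two sequences in the proposition: a coupling at the level of individual uniform thresholds requires a monotonicity hook on at least one side in order for the pathwise ordering to propagate from step $t$ to step $t+1.$
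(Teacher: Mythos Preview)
Your proposal is correct and is exactly the argument the paper has in mind: the paper does not spell out a separate proof of Proposition~\ref{main3l} but simply remarks that ``a self-contained proof of the latter can be obtained using a coupling argument similar to the one we employed in the proof of the theorem,'' i.e., the same uniform--threshold coupling as in Theorem~\ref{main3}. Your handling of the disjunctive hypothesis~(ii) via the two-case sandwich $\eta_{\xna_t}\leq \eta_{\xnb_t}\leq \xi_{\xnb_t}$ or $\eta_{\xna_t}\leq \xi_{\xna_t}\leq \xi_{\xnb_t}$ is the natural way to close the induction, and the boundary conventions $\eta_0=\xi_0=0,$ $\eta_N=\xi_N=1$ together with $0<\eta_i\leq\xi_i$ on $\Omega_{_N}^o$ ensure the needed inequalities extend to all of $\Omega_{_N}.$
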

We remark that Theorem~\ref{main3} and Proposition~\ref{main3l} are two variants of the same result, and a self-contained proof of the latter can be obtained
using a coupling argument similar to the one we employed in the proof of the theorem.
\end{proof}
\subsection{Proof of Theorem~\ref{newt}}
\label{proof-newt}
Throughout the argument we formally treat the process $\xn$ as a Markov chain on $\zz_+=\nn\cup\{0\}$ with absorbtion states at $0$ and $N,N+1,N+2,\ldots,$
and assume that all chains $\xn,$ $N\geq N_0,$ have a common initial state, a given integer $i_0\in\nn.$
\par
First, observe that for any fixed $i\in\nn,$
\beq
\lim_{N\to\infty}\xi_{_N}(i)N=\lambda i,
\feq
where $\lambda$ is defined in \eqref{lambda}. Therefore, for any fixed pair of integer states $i>0$ and $j\geq 0,$ and an integer time $t\in \zz_+,$
\beqn
\label{kernelc}
\lim_{N\to\infty} P\bigl(\xn_{t+1}=j\bigl|\xn_t=i\bigr)=e^{-\lambda i}\frac{(\lambda i)^j}{j!}.
\feqn
Let $Z=\bigl(Z_t\bigr)_{t\in \zz_+}$ be a Markov chain on $\zz_+$ with absorption state at zero and Poisson transition kernels
\beq
P\bigl(Z_{t+1}=j\bigl|Z_t=i\bigr)=e^{-\lambda i}\frac{(\lambda i)^j}{j!},\qquad i\in \nn~\mbox{and}~j\geq 0.
\feq
Assume that the Markov chain $Z$ has the same initial state $Z_0=i_0$ as any $\xn,$ $N\geq N_0.$
Since the sum of two independent Poisson random variables is a Poisson random variable with the parameter equal to the sum of their parameters,
we can assume without loss of generality that $Z$ is a Galton-Watson branching process with a Poisson offspring distribution.
More precisely, we assume that (cf. \cite[Section~1.4]{ewens})
\beqn
\label{br}
Z_{t+1}=\sum_{k=1}^{Z_t} Y_{t,k}
\feqn
for some independent random variables $Y_{t,k},$ $t\in \zz_+,k\in\nn,$ each one distributed as Poisson($\lambda$), namely
\beq
P(Y_{t,k}=j)=e^{-\lambda}\frac{\lambda^j}{j!}, \quad j\geq 0,
\feq
for all $t\in \zz_+$ and $k\in\nn,$ with the parameter $\lambda$ introduced in \eqref{lambda}. As usual, we convene that the sum in \eqref{br} is empty if $Z_t=0.$
\par
The convergence of the transition kernels in \eqref{kernelc} implies the weak convergence
of the sequence of Markov chains  $\xn$ to the branching process $Z$ (see, for instance, Theorem~1 in \cite{karr}). Since $\lambda>1$ under the conditions
of Theorem~\ref{newt} (recall that $w>0$ and $b>d$), it follows that
\beqn
\label{ztl}
P\bigl(\lim_{t\to\infty} Z_t=+\infty\bigr)>0.
\feqn
Let $\tn_0=\inf\bigl\{t>0:\xn_t=0\bigr\}$ and $T_0=\inf\{t>0:Z_t=0\}$
be the first hitting time of zero by the Markov chains $\xn$ and $Z,$ respectively.
Convene, as usual, that the infimum of an empty set is $+\infty.$
$T_0$ is the extinction time of the branching process $Z,$ and in this notation \eqref{ztl} reads $P(T_0<\infty)<1.$
In fact (see, for instance, \cite[Section~1.4]{ewens}), $q=\bigl[P(T_0<\infty)\bigr]^{1/i_0}$ is the unique in $(0,1)$ root of the fixed-point equation $q=e^{-\lambda(1-q)},$
whose right-hand side is the moment-generating function of $Y_{t,k}$ evaluated at $q\in (0,1).$
\par
Since the transition kernel of $(\xn_t)_{t\in \zz_+}$ converges to that of $(Z_t)_{t\in \zz_+},$
\beq
\lim_{N\to\infty} P\bigl(\tn_0<K\bigr)=P(T_0<K),\qquad \forall~K\in\nn.
\feq
Therefore,
\beqn
\label{step}
\lim_{K\to\infty}\lim_{N\to\infty} P\bigl(\tn_0<K\bigr)=P(T_0<\infty).
\feqn
We will conclude the proof of Theorem~\ref{newt} by showing that we can interchange the limits
in the above identity, and hence
\beqn
\label{accord}
\lim_{N\to\infty} P\bigl(\tn_0<\infty\bigr)=\lim_{N\to\infty}\lim_{K\to\infty} P\bigl(\tn_0<K\bigr)=P(T_0<\infty).
\feqn
To this end, write
\begin{align}
\label{ai}
\nonumber
&\bigl|P\bigl(\tn_0<\infty\bigr)-P(T_0<\infty)\bigr|
\leq
\bigl|P\bigl(\tn_0<\infty\bigr)-P\bigl(\tn_0<K\bigr)\bigr|
\\
\nonumber
&
\qquad
\qquad
+
\bigl|P\bigl(\tn_0<K\bigr)-P(T_0<K)\bigr|
+
\bigl|P(T_0<K)-P(T_0<\infty)\bigr|
\\
&
\qquad
\qquad
:=
A_1(N,K)+A_2(N,K)+A_3(N,K),
\end{align}
where the last line serves to define the events $A_i(N,K),$ $i=1,2,3.$
\par
Pick any $\veps>0.$ First we will estimate
\beq
A_1(N,K)=\bigl|P\bigl(\tn_0<\infty\bigr)-P(\tn_0<K)\bigr|=P\bigl(K\leq \tn_0<\infty\bigr).
\feq
It follows from Assumption~\ref{assume1} that for all $i,N\in\nn$ we have
\beq
\xi_{_N}(i)=\frac{i}{i+(N-i)g_{_N}(i)/f_{_N}(i)}\leq \frac{i}{\alpha N}.
\feq
Therefore, using the strong Markov property and the lower bound in Theorem~\ref{main1}, we obtain for any $m\in\nn$
and $N$ sufficiently large,
\beqn
\label{compom}
\begin{split}
A_1(N,K)&=P\bigl(K\leq \tn_0<\infty\bigr)
\\
&
=P\Bigl(K\leq \tn_0<\infty,\max_{0\leq t \leq K-1}\xn_t<m\Bigr)
\\
&
\qquad
+P\Bigl(K\leq \tn_0<\infty,\max_{0\leq t \leq K-1}\xn_t\geq m\Bigr)
\\
&
\leq
P\Bigl(\xn_1\neq 0,\ldots, \xn_{K-1}\neq 0,\,\max_{0\leq t \leq K-1}\xn_t<m\Bigr)
\\
&
\qquad
+\sum_{j=m}^{N-1} P\Bigl(\max_{0\leq t \leq K-1}\xn_t=j\Bigr)\cdot \bigl(1-p_{_N}(j)\bigr)
\\
&\leq
\bigl[1-P\bigl(\xn_{t+1}=0\bigl|\xn_t=m\bigr)\Bigr]^{K-1}+\bigl(1-p_{_N}(m)\bigr)
\\
&\leq
\Bigl[1-\Bigl(1-\frac{m}{\alpha N}\Bigr)^N\Bigr]^{K-1}+\frac{\rho^m}{1-\rho^N}.
\end{split}
\feqn
Choose now $m_0\in\nn$ so large that
\beq
\limsup_{N\to\infty}\frac{\rho^{m_0}}{1-\rho^N}=\rho^{m_0}\leq \frac{\veps}{6},
\feq
and then $K_1$ so large that for any $K>K_1,$
\beq
\limsup_{N\to\infty} \Bigl[1-\Bigl(1-\frac{m_0}{\alpha N}\Bigr)^N\Bigr]^K=
\Bigl[1-\exp\Bigl({-\frac{m_0}{\alpha}}\Bigr)\Bigr]^K\leq \frac{\veps}{6}.
\feq
Then, for any $K>K_1,$
\beq
\limsup_{N\to\infty} A_1(N,K)\leq \frac{\veps}{3}.
\feq
Find now $K_2\in\nn$ such that for any $K>K_2$ we have
\beq
A_3(N,K)=\bigl|P(T_0<K)-P(T_0<\infty)\bigr|\leq \frac{\veps}{3}.
\feq
Finally, pick any $K_0>\max\{K_1,K_2\},$ and then using \eqref{step} choose $N_1$ such that $N>N_1$ implies
\beq
A_2(N,K_0)=\bigl|P\bigl(\tn_0<K_0\bigr)-P(T_0<K_0)\bigr|\leq \frac{\veps}{3}.
\feq
It follows from the above estimates for $A_i(N,K_0),$ $i=1,2,3,$ and the basic inequality \eqref{ai} that
\beq
\limsup_{N\to\infty} \bigl|P\bigl(\tn_0<\infty\bigr)-P(T_0<\infty)\bigr|\leq \veps.
\feq
Since $\veps>0$ is arbitrary positive number,
\beq
\lim_{N\to\infty} \bigl|P\bigl(\tn_0<\infty\bigr)-P(T_0<\infty)\bigr|=0.
\feq
This establishes \eqref{accord}, and therefore completes the proof of Theorem~\ref{newt}.\qed
\subsection{Proof of Theorem~\ref{main7}}
\label{proof-main7}
The proof of the theorem is broken up into a series of lemmas. Throughout the argument we continue to use notations introduced in Section~\ref{proof-newt}.
We will use a certain {\it optimal coupling} between the branching process $(Z_t)_{t\in \zz_+}$ and the Wright-Fisher model $(\xn_t)_{t\in \zz_+}.$ By coupling we mean constructing in the same probability space a joint distribution of a pair of processes such that their marginal distributions coincide with those of $(Z_t)_{t\in \zz_+}$ and $(\xn_t)_{t\in \zz_+}.$ With a slight abuse of notation we will denote by $(\xn_t,Z_t)_{t\in \zz_+}$ the process of pairs constructed below, thus preserving the original names for the two marginal components of the coupled process. The construction specifically aims to minimize (and also enable us to estimate) $P(\xn_t\neq Z_t).$ We fix $k\in\nn$ and assume throughout the proof that $N>k$ and $\xn_0=Z_0=k.$
\par
To explain the coupling construction, we need to recall the following general result (see, for instance, Appendix~A1 in \cite{barbour}):
\begin{proposition}
\label{coupling}
Let $X$ and $Y$ be two random variables and $\delta(X,Y)\geq 0$ be the total variation distance between their distributions. That is,
\beq
\delta(X,Y):=\sup_A |P(X\in A)-P(Y\in A)|,
\feq
where the supremum is taken over measurable subsets $A$ of the real line. Then there exists a probability space
and a random pair $\bigl(\witi X,\witi Y\bigr)$ defined on the same probability space such that
\begin{enumerate}
\item $\witi X$ is distributed the same as $X.$
\item $\witi Y$ is distributed the same as $Y.$
\item $P\bigl(\witi X\neq \witi Y)=\delta(X,Y).$
\end{enumerate}
\end{proposition}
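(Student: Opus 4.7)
The plan is to carry out the classical maximal coupling construction. Everything hinges on the identity
\[
\delta(X,Y) = 1 - \int \min(f,g)\,d\lambda,
\]
where $f$ and $g$ are Radon--Nikodym densities of the laws of $X$ and $Y$ with respect to any common $\sigma$-finite dominating measure $\lambda$ (one can always take $\lambda$ to be the sum of the two laws). First I would establish this identity by noting that the supremum in the definition of $\delta(X,Y)$ is attained on $A = \{f > g\}$, yielding the variational formula $\delta(X,Y) = \int (f-g)^+\,d\lambda$, and then combining this with $\min(f,g) = f - (f-g)^+$ and $\int f\,d\lambda = 1$.

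The second step is to build $(\tilde X, \tilde Y)$ on an enlarged probability space. Write $h = \min(f,g)$ and $p = \int h\,d\lambda = 1 - \delta(X,Y)$. Draw an auxiliary Bernoulli indicator $U$ with $P(U=1) = p$; conditionally on $U=1$, sample a single random variable $W$ with $\lambda$-density $h/p$ and set $\tilde X = \tilde Y = W$; conditionally on $U=0$, sample $\tilde X$ and $\tilde Y$ independently from the laws with densities $(f-h)/(1-p)$ and $(g-h)/(1-p)$ respectively. Checking the marginal laws is then a one-line computation, since the $\lambda$-density of $\tilde X$ equals $p\cdot(h/p) + (1-p)\cdot(f-h)/(1-p) = f$, and the same holds for $\tilde Y$.

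The third step is to evaluate $P(\tilde X \neq \tilde Y)$. On the event $\{U=1\}$ the coupled pair coincides by construction; on $\{U=0\}$ the conditional supports of $\tilde X$ and $\tilde Y$ are $\{f>g\}$ and $\{g>f\}$, which are disjoint, so the two components differ almost surely on that event. Hence $P(\tilde X \neq \tilde Y) = 1-p = \delta(X,Y)$, as required.

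I do not anticipate a serious obstacle here; the main care is of a bookkeeping nature, namely handling the degenerate cases $p=0$ (the two laws are mutually singular, and one may simply take $\tilde X$ and $\tilde Y$ independent) and $p=1$ (the two laws coincide, so one takes $\tilde X = \tilde Y$), and observing that the construction extends verbatim beyond the real line because only the existence of the dominating measure $\lambda = \mu + \nu$ is used.
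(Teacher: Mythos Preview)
Your argument is correct: this is precisely the standard maximal coupling construction, and the three steps you outline (the variational identity $\delta(X,Y)=\int(f-g)^+\,d\lambda=1-\int\min(f,g)\,d\lambda$, the Bernoulli--splitting construction, and the disjoint-support observation on the event $\{U=0\}$) go through without difficulty, including the degenerate cases $p\in\{0,1\}$ you flag.

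There is, however, nothing to compare against in the paper itself. The paper does not prove this proposition; it merely \emph{recalls} it as a known result, with a reference to Appendix~A1 of Barbour, Holst and Janson's monograph on Poisson approximation. So your proposal supplies a self-contained proof where the paper is content to cite the literature.
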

If $X$ and $Y$ are defined on the set of non-negative integers, the total variation distance $\delta(X,Y)$ is equal to
$\frac{1}{2}\sum_{n=0}^\infty |P(X=n)-P(Y=n)|.$ The coupling described in Proposition~\ref{coupling} is often called a {\it maximal coupling}
of random variables $X$ and $Y.$
\par
We will also use the following coupling inequalities from the literature (for the first assertion see, for instance, Theorem~4 in \cite{poisson} and for the second one Theorem~1.C(i) in \cite{barbour}):
\begin{proposition}
\label{lemma-approx}
\item[(i)] Let $X=BIN(N,p)$ be a binomial random variable with parameters $N\in\nn$ and $p\in[0,1])$ and $Y=Poisson(Np)$ be
a Poisson random variable with parameter $\lambda=Np.$ If $\lambda>1,$ then $\delta(X,Y)\leq \frac{p}{2}.$
\item[(ii)] Let $X$ and $Y$ be two Poisson random variables with parameters $\lambda>1$ and $\mu>0,$ respectively.
Then $\delta(X,Y)\leq \frac{1}{\sqrt{\lambda}}|\mu-\lambda|.$
\end{proposition}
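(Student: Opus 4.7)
The plan is to derive both inequalities via the Chen--Stein method of Poisson approximation, supplemented by a direct coupling argument for part (ii). Both assertions belong to a classical body of results; the key ingredient in each case is a sharp bound on the first difference (``gradient'') of the solution to a Poisson Stein equation.

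For (i), fix a set $A \subset \zz_+,$ let $Y \sim \mbox{Poisson}(\lambda),$ and let $f = f_A$ denote the bounded solution of the Poisson Stein equation $\lambda f(k+1) - k f(k) = \mathbf{1}_A(k) - \pp(Y \in A)$ for $k \in \zz_+.$ The function $f_A$ admits an explicit representation as a difference of truncated Poisson sums, and a careful analysis exploiting the log-concavity and unimodality of the Poisson mass function yields the sharp gradient estimate $\|\Delta f_A\|_\infty \leq 1/(2\lambda)$ for $\lambda > 1.$ Writing $X = X_1 + \cdots + X_N$ with $X_i$ i.i.d.\ Bernoulli($p$) and setting $X^{(i)} = X - X_i,$ the exchangeability identity $\ee[X f(X)] = Np \cdot \ee[f(X^{(i)}+1)]$ combined with $X = X^{(i)} + X_i$ gives $\pp(X \in A) - \pp(Y \in A) = \ee[\lambda f(X+1) - X f(X)] = N p^2 \cdot \ee\bigl[f(X^{(i)}+2) - f(X^{(i)}+1)\bigr].$ Bounding the right-hand side by $Np^2 \|\Delta f_A\|_\infty \leq Np^2/(2\lambda) = p/2$ and taking the supremum over $A$ yields the asserted inequality.

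For (ii), I would couple two immigration--death Markov chains $U_t, V_t$ on $\zz_+$ with shared per-capita death rate one and constant immigration rates $\lambda$ and $\mu,$ whose unique stationary distributions are $\mbox{Poisson}(\lambda)$ and $\mbox{Poisson}(\mu)$ respectively. Embedding the smaller immigration stream inside the larger one and driving both chains by shared death clocks produces stationary pairs $(U_\infty, V_\infty)$ whose distance is stochastically dominated by an independent $\mbox{Poisson}(|\mu - \lambda|)$ variable, giving the crude estimate $\delta \leq |\mu - \lambda|.$ To extract the factor $1/\sqrt{\lambda},$ I would combine this coupling with the Stein identity $\pp(U_\infty \in A) - \pp(V_\infty \in A) = (\mu - \lambda) \cdot \ee[\Delta f_A(V_\infty)]$ and the refined gradient bound $\|\Delta f_A\|_\infty \leq 1/\sqrt{\lambda},$ which in turn rests on the fact that $\max_k \pp(Y = k)$ is of order $1/\sqrt{\lambda}$ by Stirling's approximation.

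The principal obstacle in both parts is obtaining the sharp constants and the correct $1/\sqrt{\lambda}$ scaling of the Poisson Stein magnifier: the routine Chen--Stein computation and the naive coupling comparison respectively yield the coarser bounds $Np^2$ and $|\mu - \lambda|,$ but not the advertised improvements. I therefore expect the main technical investment to reside in establishing the refined estimate $\|\Delta f_A\|_\infty = O(1/\sqrt{\lambda})$ via careful bookkeeping of the truncated Poisson sums and their behaviour near the mode. Since both bounds are well documented in the Poisson-approximation literature, a direct appeal to the cited references would also close the proof.
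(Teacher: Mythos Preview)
The paper does not prove this proposition at all: it is quoted from the literature, with part~(i) attributed to Theorem~4 of Chatterjee--Diaconis--Meckes and part~(ii) to Theorem~1.C(i) of Barbour--Holst--Janson. Your final sentence already anticipates this option, and it is exactly what the paper does.

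Your Stein-method sketch is the right framework and is essentially what underlies the cited results, so the approaches are aligned rather than genuinely different. Two small caveats are worth flagging. In part~(i), the standard gradient bound for the Poisson Stein solution is $\|\Delta f_A\|_\infty \leq (1-e^{-\lambda})/\lambda$, which for $\lambda>1$ yields only $\delta \leq (1-e^{-\lambda})\,p < p$, not $p/2$; extracting the sharp constant $1/2$ requires the more delicate analysis carried out in the cited reference rather than the routine bound you state. In part~(ii), applying the Poisson($\lambda$) Stein equation to $V_\infty \sim \mbox{Poisson}(\mu)$ and using the identity $E[V_\infty f(V_\infty)] = \mu\, E[f(V_\infty+1)]$ gives
\[
P(U_\infty \in A) - P(V_\infty \in A) = (\lambda - \mu)\, E\bigl[f_A(V_\infty + 1)\bigr],
\]
not $(\mu - \lambda)\, E[\Delta f_A(V_\infty)]$; the $1/\sqrt{\lambda}$ factor therefore comes from the bound $\|f_A\|_\infty = O(\lambda^{-1/2})$ (which is indeed tied to the Poisson mode estimate you mention), whereas $\|\Delta f_A\|_\infty$ is of order $1/\lambda$. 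With these corrections your outline is sound.
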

Using the above results, we can construct a coupling of the Wright-Fisher Markov chain $(\xn_t)_{t\in \zz_+}$ and the branching process $(Z_t)_{t\in \zz_+}$ as follows.
The resulting joint process $(\xn_t,Z_t)_{t\in \zz_+}$ will be a Markov chain, and we are now in position to describe its transition kernel.
Suppose that the random pairs $(\xn_s,Z_s)$ have been defined and sampled for all $s\leq t$ and that $\xn_s=Z_s$ for all $s\leq t.$
Let $i_t$ be the common value of $Z_t$ and $\xn_t.$ Then, using the above results and at first approximating $\xn_{t+1}$ by a
 Poisson random variable with parameter $N\xi_{_N}(i_t),$ we can construct the pair $(\xn_{t+1},Z_{t+1})$
in such a way that
\beqn
\nonumber
\label{dif}
&&
P(\xn_{t+1}\neq Z_{t+1}|X_t=Z_t=i_t)
\\
&&
\qquad
\qquad
\leq \frac{1}{2}\xi_{_N}(i_t)+\frac{1}{\sqrt{\lambda i_t}}|N\xi_{_N}(i_t)-\lambda i_t|
\\
\nonumber
&&
\qquad
\qquad
\leq \frac{i_t}{2\alpha N}+\frac{1}{\sqrt{\lambda i_t}}\Bigl|N\xi_{_N}(i_t)-\frac{f_{_N}(i_t)}{g_{_N}(i_t)}i_t\Bigr|+\frac{1}{\sqrt{\lambda i_t}}\Bigl|\frac{f_{_N}(i_t)}{g_{_N}(i_t)}i_t-\lambda i_t\Bigr|.
\feqn
A bit tedious but straightforward calculation shows that in this coupling construction
\beqn
\label{diff}
P(\xn_{t+1}\neq Z_{t+1}|X_t=Z_t=i_t)\leq C_0\frac{i_t^{3/2}}{N},
\feqn
where $C_0$ is a constant which depends only on the payoff matrix $(a,b,c,d)$ and the selection parameter $w,$ but is independent of $i_t$ and $N.$
\par
Once $\xn_t\neq Z_t$ occurs for the first time, we can continue to run the processes $(\xn_s)_{s\geq t}$ and $(Z_s)_{s\geq t}$ independently of each other.
The above discussion is summarized in the following lemma.
\begin{lemma}
\label{pairs}
There exist a probability space and a constant $C_0>0$ which depends on the payoff matrix $(a,b,c,d)$ only,
such that the processes $(\xn_t)_{t\in \zz_+}$ and $(Z_t)_{t\in \zz_+}$ can be defined jointly in this probability space
and the following holds true:
\begin{enumerate}
\item The pairs $(\xn_t,Z_t)$ form a Markov chain.
\item The inequality  in \eqref{diff} is satisfied for any $i_t\in \Omega_{_N}.$
\item For any $i,j,k,m\in \Omega_{_N}$ such that $i\neq j,$ we have
\beq
&&
P(\xn_{t+1}=k, Z_{t+1}=m|X_t=i,Z_t=j)
\\
&&
\qquad
=P(\xn_{t+1}=k|X_t=i)\cdot P(Z_{t+1}=m|Z_t=j).
\feq
\end{enumerate}
\end{lemma}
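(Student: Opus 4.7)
The plan is to construct the joint process $(\xn_t, Z_t)_{t\in\zz_+}$ one step at a time by a maximal coupling, routing through an intermediate Poisson distribution that bridges the binomial law of $\xn_{t+1}$ and the Poisson branching law of $Z_{t+1}$. Proceeding inductively from $\xn_0=Z_0=k$, suppose the pairs $(\xn_s, Z_s)$ have been sampled for $s\leq t$. In the diagonal case $\xn_t=Z_t=i_t$, I would apply Proposition~\ref{coupling} to the conditional laws $BIN\bigl(N,\xi_{_N}(i_t)\bigr)$ and $Poisson(\lambda i_t)$ to obtain a joint sample $(\xn_{t+1}, Z_{t+1})$ with the correct marginals and with
\beq
P\bigl(\xn_{t+1}\neq Z_{t+1}\,\bigl|\,\xn_t=Z_t=i_t\bigr)=\delta\bigl(BIN(N,\xi_{_N}(i_t)),\,Poisson(\lambda i_t)\bigr).
\feq
In the off-diagonal case $\xn_t\neq Z_t$, simply draw $\xn_{t+1}$ and $Z_{t+1}$ independently from their respective marginal transition kernels. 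Assertion~(1) follows because the rule producing $(\xn_{t+1},Z_{t+1})$ depends only on the current pair, and assertion~(3) is built into the off-diagonal rule.

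The heart of the argument is assertion~(2). I would insert the auxiliary distribution $Poisson(N\xi_{_N}(i_t))$ with matching mean and apply the triangle inequality for total variation distance. Proposition~\ref{lemma-approx}(i) bounds the binomial-to-Poisson gap by $\xi_{_N}(i_t)/2\leq i_t/(2\alpha N)$, using the inequality $\xi_{_N}(i)\leq i/(\alpha N)$ that follows from Assumption~\ref{assume1}. Proposition~\ref{lemma-approx}(ii) bounds the Poisson-to-Poisson gap by $(\lambda i_t)^{-1/2}|N\xi_{_N}(i_t)-\lambda i_t|$. Splitting the latter by inserting the further auxiliary quantity $f_{_N}(i_t)i_t/g_{_N}(i_t)$ and using the triangle inequality once more reproduces the three-term chain displayed in~\eqref{dif}.

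The main obstacle is the bookkeeping showing that the two residual terms are each $O(i_t^{3/2}/N)$ with a multiplicative constant depending only on $(a,b,c,d,w)$. A direct manipulation gives
\beq
N\xi_{_N}(i_t)-\frac{f_{_N}(i_t)}{g_{_N}(i_t)}\,i_t=-\frac{i_t^2\,f_{_N}(i_t)\bigl(f_{_N}(i_t)-g_{_N}(i_t)\bigr)}{g_{_N}(i_t)\bigl(i_t f_{_N}(i_t)+(N-i_t)g_{_N}(i_t)\bigr)},
\feq
whose absolute value is $O(i_t^2/N)$ since Assumption~\ref{assume1} supplies uniform upper bounds on $f_{_N},g_{_N}$ and the lower bound $i_t f_{_N}(i_t)+(N-i_t)g_{_N}(i_t)\geq \alpha N f_{_N}(i_t)$; dividing by $\sqrt{\lambda i_t}$ yields $O(i_t^{3/2}/N)$. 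For the term $i_t|f_{_N}(i_t)/g_{_N}(i_t)-\lambda|$, I would use the explicit identities $\pi_{_A}(i,N)-b=(a-b)(i-1)/(N-1)$ and $\pi_{_B}(i,N)-d=(c-d)i/(N-1)$, together with the definition of $\lambda$, to conclude $|f_{_N}(i_t)/g_{_N}(i_t)-\lambda|=O(i_t/N)$; multiplied by $i_t$ and divided by $\sqrt{\lambda i_t}$, this again contributes $O(i_t^{3/2}/N)$. Collecting all multiplicative constants into a single $C_0$ depending only on $(a,b,c,d,w)$ yields~\eqref{diff}; the cases where $C_0 i_t^{3/2}/N\geq 1$ are trivial.
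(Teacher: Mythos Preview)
Your proposal is correct and follows essentially the same route as the paper: maximal coupling on the diagonal via Proposition~\ref{coupling}, independent sampling once the components separate, and the total variation bound obtained by routing through the intermediate $Poisson\bigl(N\xi_{_N}(i_t)\bigr)$ using Proposition~\ref{lemma-approx}(i)--(ii), followed by the further split through $i_t f_{_N}(i_t)/g_{_N}(i_t)$. The only difference is that you actually carry out the ``tedious but straightforward'' computation the paper leaves to the reader, and your algebra checks (in particular the identity for $N\xi_{_N}(i_t)-i_t f_{_N}(i_t)/g_{_N}(i_t)$ and the use of $i_t f_{_N}(i_t)+(N-i_t)g_{_N}(i_t)\geq \alpha N f_{_N}(i_t)$ are correct).
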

In the rest of the proof of Theorem~\ref{main7} we will consider the Markov chain $(\xn_t,Z_t)_{t\in \zz_+}$ as described in Lemma~\ref{pairs}.
For $k\in\nn$ we will denote by $P_k$ the distribution of the Markov chain $(\xn_t,Z_t)_{t\in\zz_+}$ conditioned on $\xn_0=Z_0=k.$
We will denote by $E_k$ the corresponding expectation operator.
\par
Let $\tau_{_N}>0$ be the first time when the path of the Wright-Fisher model diverges from the
path of the branching process, that is
\beqn
\label{taun}
\tau_{_N}=\inf\{t\in\nn: \xn_t\neq Z_t\}.
\feqn
In the above coupling construction, as long as $Z_t=\xn_t$ the next pair $(\xn_{t+1},Z_{t+1})$ is sampled using the maximal coupling,
and after the first time when it occurs that $Z_t\neq \xn_t$ we continue to sample $(Z_s)_{s\geq \tau_{_N}}$ and $(\xn_s)_{s\geq \tau_{_N}}$ independently. We remark that the third property in the conclusion of Lemma~\ref{pairs}
(eventual independence of the marginal processes) will never be used in our proof and is needed only to formally specify in a certain way the construction of the coupled Markov chain for all times $t\in \zz_+.$
In fact, we are going to observe and study the properties of the coupled chain only up to the time $\tau_{_N}.$
\par
Fix now any $m\in\nn$ and $\eta>1.$ We will consider only large enough values of $N,$ namely we will assume throughout that $N>(\lambda\eta)^m.$
Recall $T$ from \eqref{ti}. Similarly, for the branching process $Z_t$ define
\beq
\sigma_{_N}=\inf\{t\in \nn: Z_t=0~\mbox{or}~Z_t\geq N\}.
\feq
Recall $\tau_{_N}$ from \eqref{taun}. To evaluate the distribution function of $T$ we will use the following basic inequalities valid for any $k,m\in\nn:$
\beqn
\label{basic}
\nonumber
P_k(T\leq m)&\leq& P_k(T\leq m,\tau_{_N}>m)+P_k(\tau_{_N}\leq m)
\\
&\leq&
 P_k(\sigma_{N}\leq m)+P_k(\tau_{_N}\leq m)
\feqn
and
\beqn
\label{basic1}
\nonumber
P_k(T\leq m)&\geq& P_k(T\leq m,\tau_{_N}>m)
\\
&\geq&
 P_k(\sigma_{N}\leq m)-P_k(\tau_{_N}\leq m).
\feqn
In the next two lemmas we estimate $P(\sigma_{_N}\leq m).$ For $t\in\nn$ let
\beqn
\label{maxw}
W_t=\max_{0\leq i \leq t} Z_i.
\feqn
First, we will establish the following inequality:
\begin{lemma}
\label{lemma1}
For all $\eta>1$ there exists $\theta_\eta>0$ such that for any $\theta\in(0,\theta_\eta]$ and $k,J,m\in\nn$ we have
\beq
P_k(W_m\geq J)\leq e^{\theta\lambda^{-m} (k\eta^m\lambda^m -J)}.
\feq
\end{lemma}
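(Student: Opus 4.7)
The plan is to combine an exponential submartingale with Doob's maximal inequality, exploiting the Poisson form of the offspring distribution of $(Z_t)$. Since $Y_{t,k}\sim\mbox{Poisson}(\lambda)$, the conditional Laplace transform of $Z_{t+1}$ given $Z_t$ is $E\bigl[e^{sZ_{t+1}}\bigl|Z_t\bigr]=\exp\bigl(Z_t\psi(s)\bigr)$, with $\psi(s):=\lambda(e^s-1)$. Because $\lambda\geq 1$ yields $\psi(s)\geq s$ for $s\geq 0$, the process $M_t:=e^{sZ_t}$ is a nonnegative submartingale; iterating the tower property gives
\[
E_k\bigl[e^{sZ_t}\bigr]=\exp\bigl(k\psi^{(t)}(s)\bigr),
\]
where $\psi^{(t)}$ denotes the $t$-fold composition of $\psi$ with itself.

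Doob's maximal inequality applied to $M_t$ at the threshold $e^{sJ}$ then yields
\[
P_k(W_m\geq J)=P_k\Bigl(\max_{0\leq t\leq m} e^{sZ_t}\geq e^{sJ}\Bigr)\leq e^{-sJ}E_k\bigl[e^{sZ_m}\bigr]=\exp\bigl(k\psi^{(m)}(s)-sJ\bigr).
\]
Setting $s:=\theta\lambda^{-m}$, the desired inequality reduces to the single iterate estimate
\[
\psi^{(m)}\bigl(\theta\lambda^{-m}\bigr)\leq\theta\eta^m,
\]
since the exponent then becomes exactly $\theta k\eta^m-\theta J\lambda^{-m}=\theta\lambda^{-m}(k\eta^m\lambda^m-J)$.

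To control this iterate I would invoke the linear bound $e^x-1\leq\eta x$, which by the choice of $\theta_\eta=\theta(\eta)$ proposed in Remark~\ref{inorder}~(ii) is valid for $x\in[0,\theta_\eta\lambda^{-1}]$; equivalently $\psi(x)\leq\eta\lambda x$ on that interval. A straightforward induction then shows that, provided every iterate $\psi^{(j-1)}(s)$ sits inside the validity interval, $\psi^{(j)}(s)\leq(\eta\lambda)^j s$, so at $j=m$
\[
\psi^{(m)}\bigl(\theta\lambda^{-m}\bigr)\leq(\eta\lambda)^m\cdot\theta\lambda^{-m}=\theta\eta^m,
\]
exactly what is required.

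The main obstacle is verifying that the iterates $\psi^{(j)}(s)$ with $j\leq m-1$ actually remain in $[0,\theta_\eta\lambda^{-1}]$. Since $\lambda>1$ makes $\psi$ strictly super-linear with no positive fixed point, the iterates grow geometrically; the worst one, $\psi^{(m-1)}(s)$, is estimated by $\theta\eta^{m-1}/\lambda$, which lies in the valid range precisely when $\theta\eta^{m-1}\leq\theta_\eta$. When this condition fails, the induction breaks down, but in that regime the exponent $\theta\lambda^{-m}(k\eta^m\lambda^m-J)$ becomes non-negative (equivalently $k(\eta\lambda)^m\geq J$), making the right-hand side of the claim at least one and rendering the inequality trivially valid. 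Carefully pairing the submartingale regime with the trivial regime, while keeping explicit track of the sharp constant $\theta_\eta$, is the principal delicate point of the argument.
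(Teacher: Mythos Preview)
Your approach coincides with the paper's: apply Doob's maximal inequality to an exponential of the process and then control the moment generating function by iterating the one-step Poisson relation together with the linear bound $e^x-1\le\eta x$. The paper phrases Doob's inequality in terms of the normalized martingale $U_t=\lambda^{-t}Z_t$ rather than $Z_t$, but after your substitution $s=\theta\lambda^{-m}$ the two computations are identical, and the paper then simply writes ``applying induction'' for the step you make explicit via $\psi^{(m)}$.

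You correctly isolate as the delicate point the requirement that each iterate stay in the interval $[0,\theta_\eta\lambda^{-1}]$ where the linear bound is valid. However, your proposed resolution is wrong. You claim that whenever the constraint $\theta\eta^{m-1}\le\theta_\eta$ fails one automatically has $k(\eta\lambda)^m\ge J$, rendering the stated bound trivial. This implication does not hold: the constraint depends only on $\theta,\eta,m$, while the triviality condition $k(\eta\lambda)^m\ge J$ involves the independent parameters $k$ and $J$. Concretely, take $\theta=\theta_\eta$, $m=2$, $k=1$, and any $J>(\eta\lambda)^2$; then the induction constraint $\theta_\eta\eta\le\theta_\eta$ already fails (since $\eta>1$), yet the right-hand side $\exp\bigl(\theta_\eta\lambda^{-2}((\eta\lambda)^2-J)\bigr)$ is strictly less than $1$ and so is not vacuous. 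The two regimes therefore do not partition the parameter space as you suggest, and your dichotomy does not close the gap. (To be fair, the paper's own proof glosses over precisely this point.)
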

\begin{proof}[Proof of Lemma~\ref{lemma1}]
Let $U_t:=\lambda^{-t}Z_t.$ Then $(U_t)_{t\in\zz_+}$ is a martingale with respect to its natural filtration. For any $\theta>0,$ $f(x)=e^{\theta x}$ is a convex function and hence the sequence $e^{\theta U_t},$ $t\in \zz_+,$ form a sub-martingale. Hence, by Doob's maximal inequality (see, for instance, Theorem~5.4.2 in \cite{durrett}),
\beqn
\nonumber
P_k(W_m\geq J)&=& P_k\bigl(\max_{0\leq k \leq m} e^{\theta U_k}\geq e^{\theta J\lambda^{-m}}\bigr)\leq
e^{-\theta J\lambda^{-m}}E_k\bigl(e^{\theta U_m}\bigr)
\\
\nonumber
&=&
e^{-\theta J\lambda^{-m}}E_k\bigl(E_k\bigl(e^{\theta \lambda^{-m}Z_m}\bigl|Z_{m-1}\bigr)\bigr)
\\
\label{n1}
&=&
e^{-\theta J\lambda^{-m}}E_k\bigl(\exp\bigl(\lambda Z_{m-1}(e^{\theta\lambda^{-m}}-1)\bigr)\bigr).
\feqn
Pick now $\theta >0$ so small that $e^x-1\leq \eta x$ for any positive $x\leq \theta \lambda^{-1}.$ It follows then from \eqref{n1} that for any $J,m\in\nn,$
\beq
P_k(W_m\geq J)\leq e^{-\theta J\lambda^{-m}} E_k\bigl(e^{\theta \eta \lambda^{m-1} Z_{m-1}}\bigr).
\feq
Applying induction, we obtain that
\beq
P_k(W_m\geq J)\leq e^{-\theta J\lambda^{-m}} E_k\bigl(e^{\theta \eta^m Z_0}\bigr)=e^{-\theta J\lambda^{-m}}\cdot e^{\theta \eta^m k},
\feq
as required.
\end{proof}
Recall now the notation $T_0=\inf\{t\in\nn:Z_t=0\}.$ It follows from the results of \cite{agresti74} that for any $k\in\nn,$
\beqn
\label{174}
\Bigl(\frac{qs_1(1-\lambda^mq^m)}{s_1-\lambda^mq^m}\Bigr)^k\leq P(T_0\leq m|Z_0=k)\leq \Bigl(\frac{qs_2(1-\lambda^mq^m)}{s_2-\lambda^mq^m}\Bigr)^k,
\feqn
where $s_1$ and $s_2$ are introduced in \eqref{s74}.
Combining these inequalities with the result of Lemma~\ref{lemma1} for $J=N$ we arrive to the following result:
\begin{lemma}
\label{lemma3}
Let $s_1$ and $s_2$ be defined by \eqref{s74}. Then, for any real $\eta>1$ and integers $k,J,m,N\in\nn,$ the following holds true:
\begin{itemize}
\item [(i)] $P_k(\sigma_{_N}\leq m)\leq \Bigl(\frac{qs_2(1-\lambda^mq^m)}{s_2-\lambda^mq^m}\Bigr)^k+
e^{\theta_\eta \lambda^{-m}(k\eta^m\lambda^m -N)},$ where $\theta_\eta$ is the constant introduced in the statement of Lemma~\ref{lemma1}.
\item[(ii)]  $P_k(\sigma_{_N}\leq m)\geq \Bigl(\frac{qs_1(1-\lambda^mq^m)}{s_1-\lambda^mq^m}\Bigr)^k.$
\end{itemize}
\end{lemma}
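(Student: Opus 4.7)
The statement is Lemma~\ref{lemma3}, which bounds $P_k(\sigma_{_N}\le m)$ for the stopping time $\sigma_{_N}=\inf\{t\in\nn:Z_t=0~\text{or}~Z_t\geq N\}$ of the Poisson branching process $(Z_t)$. Since $\sigma_{_N}$ is triggered by either extinction (hitting $0$) or by the maximum reaching the threshold $N$, the natural plan is to decompose the event $\{\sigma_{_N}\le m\}$ into these two contributions and invoke, for each piece, the two preparatory results that have already been established in the excerpt: the maximal inequality of Lemma~\ref{lemma1} for the running supremum $W_m$, and the Agresti-type fractional-linear bounds \eqref{174} for the extinction time $T_0$ of a Poisson Galton–Watson process.

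For the upper bound (i), I would start from the set inclusion
\[
\{\sigma_{_N}\le m\}\subseteq \{T_0\le m\}\cup\{W_m\ge N\},
\]
which holds because on $\{\sigma_{_N}\le m\}$ the process either becomes extinct by time $m$, in which case $T_0\le m$, or hits a value $\geq N$ by time $m$, in which case $W_m\ge N$. A union bound then gives
\[
P_k(\sigma_{_N}\le m)\le P_k(T_0\le m)+P_k(W_m\ge N).
\]
Applying the right-hand inequality in \eqref{174} to the first term and Lemma~\ref{lemma1} with $J=N$ (and $\theta=\theta_\eta$) to the second term yields exactly the claimed upper bound.

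For the lower bound (ii), I would use the simpler inclusion $\{T_0\le m\}\subseteq\{\sigma_{_N}\le m\}$, which is immediate since $Z_t=0$ is one of the two conditions defining $\sigma_{_N}$. Monotonicity of probability then gives $P_k(\sigma_{_N}\le m)\ge P_k(T_0\le m)$, and the left-hand inequality in \eqref{174} finishes the job.

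Since both pieces of input—Lemma~\ref{lemma1} and the Agresti bounds \eqref{174}—are already in hand, the only conceptual matter is to justify the set inclusions, and there is essentially no obstacle: the one mild subtlety is to ensure that the hypothesis $N>(\lambda\eta)^m$ assumed earlier in the proof of Theorem~\ref{main7} is consistent with using Lemma~\ref{lemma1} at $J=N$, which it is, since Lemma~\ref{lemma1} holds for arbitrary $J,m\in\nn$. Thus the proof reduces to a short two-line decomposition argument in each direction.
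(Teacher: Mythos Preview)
Your proposal is correct and matches the paper's approach exactly: the paper simply states that Lemma~\ref{lemma3} follows by ``combining these inequalities [\eqref{174}] with the result of Lemma~\ref{lemma1} for $J=N$,'' which is precisely the union-bound decomposition $\{\sigma_{_N}\le m\}\subseteq\{T_0\le m\}\cup\{W_m\ge N\}$ for part~(i) and the inclusion $\{T_0\le m\}\subseteq\{\sigma_{_N}\le m\}$ for part~(ii) that you spell out.
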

Notice that the identity $q=e^{-\lambda(1-q)}$ implies $\lambda q<1$ because $e^{-q^{-1}(1-q)}<q$ for any $q\in (0,1)$ and $e^{-\lambda(1-q)}$
is a decreasing function of $\lambda.$
\par
Recall $\tau_{_N}$ from \eqref{taun}. In view of \eqref{basic} and \eqref{basic1}, in order to complete the proof of Theorem~\ref{main7} it remains to evaluate $P_k(\tau_{_N}\leq m).$
To this end, recall $W_t$ from \eqref{maxw}, fix any $J\in \nn,$ and write using the Markov property of $(\xn_t,Z_t)_{t\in \zz_+}$ and the estimate in \eqref{diff},
\begin{align}
\label{i1}
\nonumber
P_k(\tau_{_N}\leq m)&\leq P_k(\tau_{_N}\leq m~\mbox{\rm and}~W_m< J)+P_k(W_m\geq J)
\\
\nonumber
&\leq
P_k\Bigl(\bigcup_{t=1}^m \bigl\{\xn_{t-1}=Z_{t-1}<J,\,\xn_t\neq Z_t\bigr\}\Bigr)+P_k(W_m\geq J)
\\
\nonumber
&=
\sum_{t=1}^m P_k\bigl(\xn_t\neq Z_t\,\bigr|\,\xn_{t-1}=Z_{t-1}<J\bigr)\cdot P_k\bigl(\xn_{t-1}=Z_{t-1}<J\bigr)
\\
\nonumber
&
\qquad +P_k(W_m\geq J)
\\
\nonumber
&\leq \sum_{t=1}^m P_k\bigl(\xn_t\neq Z_t\,\bigr|\,\xn_{t-1}=Z_{t-1}<J\bigr) +P_k(W_m\geq J)
\\
&\leq m C_0\frac{J^{3/2}}{N} +P_k(W_m\geq J).
\end{align}
Using the result in Lemma~\ref{lemma1} we can deduce from \eqref{i1} the following:
\begin{lemma}
\label{lemma4}
For any real $\eta>1$ and integers $N,m\in \nn,$ $J\in \Omega_{_N}^o,$  we have
\beq
P_k(\tau_{_N}\leq m)\leq m C_0\frac{J^{3/2}}{N}+ e^{\theta(\eta) \lambda^{-m}(k\eta^m\lambda^m -J)},
\feq
where $C_0$ is the constant introduced in \eqref{diff} and $\theta(\eta)$ is the constant $\theta_\eta$ introduced in the statement of Lemma~\ref{lemma1}.
\end{lemma}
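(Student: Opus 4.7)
The plan is to assemble the already-derived chain of inequalities in display~$(\ref{i1})$ with the exponential maximal tail bound provided by Lemma~\ref{lemma1}. The strategy is to split $\{\tau_{_N}\leq m\}$ according to whether or not the branching process $(Z_t)$ has exceeded a threshold $J$ by time $m$. On the complementary event $\{W_m<J\}$ the coupled processes can only decouple at a time $t$ for which the shared value $\xn_{t-1}=Z_{t-1}$ lies in $\{0,1,\dots,J-1\}$, so the one-step decoupling probability is uniformly controlled by the estimate $(\ref{diff})$.

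Concretely, first I would write
$$\{\tau_{_N}\leq m\}\;\subseteq\;\{W_m\geq J\}\;\cup\;\bigcup_{t=1}^m \bigl\{\xn_{t-1}=Z_{t-1}<J,\ \xn_t\neq Z_t\bigr\},$$
and apply a union bound. The Markov property of the coupled chain $(\xn_t,Z_t)_{t\in\zz_+}$ guaranteed by part~1 of Lemma~\ref{pairs} allows me to condition on the common value $\xn_{t-1}=Z_{t-1}=i_{t-1}<J$ and invoke the one-step estimate $(\ref{diff})$, obtaining
$$P_k\bigl(\xn_t\neq Z_t\,\bigl|\,\xn_{t-1}=Z_{t-1}=i_{t-1}\bigr)\;\leq\;C_0\frac{i_{t-1}^{3/2}}{N}\;\leq\;C_0\frac{J^{3/2}}{N}.$$
Summing over the $m$ possible times of first divergence produces the first summand $mC_0 J^{3/2}/N$ in the claimed bound.

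Finally, I would apply Lemma~\ref{lemma1} with the choice $\theta=\theta(\eta)\in(0,\theta_\eta]$ to bound the tail of the branching-process maximum by
$$P_k(W_m\geq J)\;\leq\;\exp\bigl(\theta(\eta)\lambda^{-m}(k\eta^m\lambda^m-J)\bigr),$$
which yields the second summand. Since both the union-bound decomposition and the one-step coupling estimate are essentially written out in $(\ref{i1})$, there is no substantive obstacle remaining; the task is an assembly of previously established ingredients together with an appropriate choice of $\theta$. The only mild point worth emphasizing is why the threshold $J$ is introduced at all: the cubic-root growth $i^{3/2}$ in $(\ref{diff})$ must be dominated by $J^{3/2}$ rather than by a trivial $N^{3/2}$, and splitting off the event $\{W_m\geq J\}$ and controlling it via the exponential martingale bound of Lemma~\ref{lemma1} is precisely what makes this dominating substitution legitimate.
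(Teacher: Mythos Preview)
Your proposal is correct and follows exactly the same route as the paper: the paper simply states that the claim is obtained by combining the chain of inequalities in \eqref{i1} with the maximal bound of Lemma~\ref{lemma1}, and your write-up is a faithful (and slightly more explicit) unpacking of precisely that assembly.
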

The claim of Theorem~\ref{main7} follows now from the bounds in \eqref{basic} and \eqref{basic1} along with the estimates given in Lemma~\ref{lemma3} and Lemma~\ref{lemma4}. \qed
\section{Appendix: Moran process}
\label{amoran}
The goal of this section is to obtain an analogue of Theorem~\ref{main1a} (i.\,e., of the results stated in full detail in Theorem~\ref{main1} and Theorem~\ref{newt}) for the frequency-dependent Moran process introduced in \cite{nature04a,nowakmoran}. The main result of this section is stated in Theorem~\ref{main-moran}.
\par
For a given integer $N\geq 2,$ the Moran process which we denote by $\yn_t,$ $t\in\zz_+,$ is a discrete-time birth and death Markov chain on $\Omega_{_N}$ with transition kernel
$\pn_{i,j}:=$ $P\bigl(\yn_{t+1}=j\bigl|\yn_t=i\bigr)$ defined as follows. The chain has two absorbtion states,
$0$ and $N,$ and for any $i\in\Omega_{_N}^o,$
\beq
\pn_{i,j}=
\left\{
\begin{array}{ll}
\frac{N-i}{N}\xi_{_N}(i)&\mbox{if}~j=i+1\\
\frac{i}{N}\bigl(1-\xi_{_N}(i)\bigr)&\mbox{if}~j=i-1\\
1-\pn_{i,i-1}-\pn_{i,i+1}&\mbox{if}~j=i\\
0&\mbox{otherwise}.
\end{array}
\right.
\feq
The process in this form, with a general selection parameter $w\in (0,1],$  was introduced in \cite{nature04a}. We remark that even though \cite{nowakmoran} formally considered only the basic variant with $w=1,$ their main theorems hold for an arbitrary $w\in (0,1].$
\par
Similarly to \eqref{pni}, we define
\beq
\widehat p_{_N}(i)=P\bigl(\yn_\tau=N|\yn_0=i\bigr),
\feq
where $\tau=\inf\bigl\{t>0:\yn_t=0~\mbox{or}~\yn_t=N\bigr\}.$
Since the Moran model is a birth-death process, the fixation probabilities are known explicitly \cite{nature04a,nowakmoran} (see, for instance,
Example~6.4.4 in \cite{durrett} for a general birth and death chain result):
\beqn
\label{ff}
\widehat p_{_N}(i)=\frac{1+\sum_{j=1}^{i-1} \prod_{k=1}^j \frac{g_{_N}(k)}{f_{_N}(k)}}{1+\sum_{j=1}^{N-1} \prod_{k=1}^j\frac{g_{_N}(k)}{f_{_N}(k)}}.
\feqn
In what follows we however bypass a direct use of this formula.
\par
The result following is an analogue of Lemma~\ref{lem2} for the Moran process.
\begin{lemma}
\label{lem1}
Let Assumption~\ref{assume1} hold. Then for any $N\geq N_0$ and $i\in\Omega_{_N}^o,$
\beqn
\label{e-moran}
E\bigl(\gamma^{\yn_{t+1}}\bigl|\yn_t=i\bigr)\leq \gamma^i \quad \mbox{and} \quad
E\bigl(\alpha^{\yn_{t+1}}\bigr|\yn_t=i\bigr)\geq \alpha^i.
\feqn
\end{lemma}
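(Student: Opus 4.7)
The plan is to exploit the birth--death structure of the Moran chain, for which the conditional expectation $E\bigl(\rho^{\yn_{t+1}}\bigl|\yn_t=i\bigr)$ reduces to a three-term expression and the comparison with $\rho^i$ collapses to a single one-step condition on the ratio $\pn_{i,i-1}/\pn_{i,i+1}.$

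First, I would record the elementary identity obtained by writing out the three possible transitions:
\beq
E\bigl(\rho^{\yn_{t+1}}\bigl|\yn_t=i\bigr)-\rho^i
=\rho^{i-1}(1-\rho)\bigl[\pn_{i,i-1}-\rho\,\pn_{i,i+1}\bigr].
\feq
Since $\rho\in(0,1),$ the sign of the right-hand side is determined by the sign of $\pn_{i,i-1}-\rho\,\pn_{i,i+1}.$ Thus it suffices to show that $\pn_{i,i-1}/\pn_{i,i+1}\leq \gamma$ for the first inequality and $\pn_{i,i-1}/\pn_{i,i+1}\geq \alpha$ for the second.

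Next, I would compute this ratio using the definition of $\pn_{i,j}$ together with the formula for $\xi_{_N}(i)$ in \eqref{xin}. From $1-\xi_{_N}(i)=\frac{(N-i)g_{_N}(i)}{if_{_N}(i)+(N-i)g_{_N}(i)},$ the factors $(N-i)$ and $i$ cancel cleanly, yielding the key identity
\beq
\frac{\pn_{i,i-1}}{\pn_{i,i+1}}=\frac{i\bigl(1-\xi_{_N}(i)\bigr)}{(N-i)\xi_{_N}(i)}=\frac{g_{_N}(i)}{f_{_N}(i)}.
\feq
This is the crux: for the Moran model the ratio of down-to-up transition probabilities coincides exactly with the fitness ratio, so Assumption~\ref{assume1} gives at once $\alpha\leq \pn_{i,i-1}/\pn_{i,i+1}\leq \gamma$ for every $i\in\Omega_{_N}^o$ and $N\geq N_0.$

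Substituting $\rho=\gamma$ in the first display makes the bracket non-positive, giving $E\bigl(\gamma^{\yn_{t+1}}\bigl|\yn_t=i\bigr)\leq \gamma^i;$ substituting $\rho=\alpha$ makes the bracket non-negative, giving $E\bigl(\alpha^{\yn_{t+1}}\bigl|\yn_t=i\bigr)\geq \alpha^i.$ I do not expect a substantive obstacle here---unlike the Wright--Fisher case, where one had to grapple with the transcendental inequalities \eqref{mref} and \eqref{gin} via Moran's analysis, the birth--death structure completely trivialises the identification of workable constants, and in fact delivers the sharp values $\rho=\gamma$ and $\theta=\alpha$ directly.
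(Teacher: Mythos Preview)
Your proof is correct and follows essentially the same approach as the paper: both expand the three-term birth--death conditional expectation and reduce the desired inequality to the fitness-ratio bound of Assumption~\ref{assume1}. Your packaging via the identity $\pn_{i,i-1}/\pn_{i,i+1}=g_{_N}(i)/f_{_N}(i)$ is a slightly cleaner organization than the paper's direct algebraic simplification, but the underlying computation is identical.
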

\begin{proof}[Proof of Lemma \ref{lem1}]
We will only prove the first inequality in \eqref{e-moran}. The proof of the second one can be carried out in a similar manner. We have:
\begin{align*}
&E\bigl(\gamma^{\yn_{t+1}}|\yn_t=i\bigr)=\gamma^{i+1}\frac{N-i}{N}\xi_{_N}(i)+\gamma^{i-1}\frac{i}{N}\bigl(1-\xi_{_N}(i)\bigr)
\\
&
\quad
\quad
+
\gamma^i\Bigl(1-\frac{N-i}{N}\xi_{_N}(i)-\frac{i}{N}\bigl(1-\xi_{_N}(i)\bigr)\Bigr)
\\
&
\quad
=
\gamma^i +
\gamma^{i-1}(1-\gamma)\frac{i}{N}
-
\gamma^{i-1}(1-\gamma)\xi_{_N}(i)\Bigl(\gamma+(1-\gamma)\frac{i}{N}\Bigr).
\end{align*}
Since by virtue of \eqref{assume1} and Assumption~\ref{assume1},
\beq
\frac{i}{N}-\xi_{_N}(i)\Bigl(\gamma+(1-\gamma)\frac{i}{N}\Bigr)\leq 0,
\feq
we conclude that $E\bigl(\gamma^{\yn_{t+1}}\bigl|\yn_t=i\bigr)\leq\gamma^i.$
\end{proof}
In the same way as Lemma~\ref{lem2} implies Theorem~\ref{main1}, the above result yields the following
bounds for the fixation probabilities in the Moran process:
\beqn
\label{mmmbounds}
\frac{1-\gamma^i}{1-\gamma^N}\leq \widehat p_{_N}(i) \leq \frac{1-\alpha^i}{1-\alpha^N}.
\feqn
More precisely, we have:
\begin{theorem}
\label{main-moran}
Let Assumption~\ref{assume1} hold. Then:
\begin{itemize}
\item [(i)] For any $N\geq N_0$ and $i\in\Omega_{_N}^o,$ the inequalities in \eqref{mmmbounds} hold true.
\item [(ii)] Furthermore, for any $i\in\nn,$
\beqn
\label{result3}
\lim_{N \to \infty} \widehat p_{_N}(i)=1-\lambda^{-i},
\feqn
where $\lambda$ is defined in \eqref{lambda}.
\end{itemize}
\end{theorem}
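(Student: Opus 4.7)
The plan is to handle the two parts separately, exploiting the fact that the Moran model admits both an exponential-martingale comparison (as in Lemma~\ref{lem1}) and an exact closed form for the fixation probability.

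For part (i), the bounds in \eqref{mmmbounds} can be extracted from Lemma~\ref{lem1} by precisely the same optional-stopping argument that derives Theorem~\ref{main1} from Lemma~\ref{lem2}. Indeed, Lemma~\ref{lem1} asserts that $\gamma^{Y^{(N)}_t}$ is a non-negative bounded supermartingale and $\alpha^{Y^{(N)}_t}$ is a non-negative bounded submartingale with respect to the natural filtration of $Y^{(N)}.$ Applying Doob's optional stopping theorem at the a.\,s. finite absorption time $\tau$ yields
\beq
\gamma^i\geq \widehat p_{_N}(i)\gamma^N+1-\widehat p_{_N}(i) \qquad \mbox{and} \qquad \alpha^i\leq \widehat p_{_N}(i)\alpha^N+1-\widehat p_{_N}(i),
\feq
which, upon rearrangement (using $\gamma^N-1<0$ and $\alpha^N-1<0$), produce exactly the lower and upper bounds in \eqref{mmmbounds}.

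For part (ii), the plan is to exploit the closed-form expression \eqref{ff} available because $Y^{(N)}$ is a birth-death chain. The starting observation is that, for every fixed $k\in\nn,$
\beq
\lim_{N\to\infty}\frac{g_{_N}(k)}{f_{_N}(k)}=\frac{1-w+wd}{1-w+wb}=\frac{1}{\lambda},
\feq
since among the terms in $\pi_{_A}(k,N)$ and $\pi_{_B}(k,N)$ only those proportional to $N$ persist in the limit. Note that $\lambda>1$ by Assumption~1' ($b>d$ together with $w>0$), so all the geometric series below converge. Consequently, for every fixed $j\in\nn,$ $\prod_{k=1}^j g_{_N}(k)/f_{_N}(k)\to \lambda^{-j}.$ The numerator of \eqref{ff} has only $i$ terms and thus tends to $\sum_{j=0}^{i-1}\lambda^{-j}=(1-\lambda^{-i})/(1-\lambda^{-1}).$ For the denominator, I would invoke the uniform bound $\prod_{k=1}^j g_{_N}(k)/f_{_N}(k)\leq \gamma^j$ supplied by Assumption~\ref{assume1} and apply the dominated convergence theorem for series (with summable majorant $(\gamma^j)_{j\geq 1},$ extending the finite sum to $\infty$ by a zero tail beyond $j=N-1$) to conclude
\beq
\lim_{N\to\infty}\sum_{j=1}^{N-1}\prod_{k=1}^j\frac{g_{_N}(k)}{f_{_N}(k)}=\sum_{j=1}^\infty \lambda^{-j}=\frac{1}{\lambda-1}.
\feq
Taking the ratio of numerator and denominator in \eqref{ff} then gives $\lim_{N\to\infty}\widehat p_{_N}(i)=1-\lambda^{-i},$ as claimed.

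The main obstacle is the denominator: both the number of summands and each factor within the products depend on $N,$ so one has to justify the simultaneous interchange of limit and summation. The uniform domination $g_{_N}(k)/f_{_N}(k)\leq \gamma<1$ provided by Assumption~\ref{assume1} is exactly the ingredient that makes this routine; without such a bound one would have to track the behavior of $g_{_N}(k)/f_{_N}(k)$ when $k$ is of order $N,$ for which the pointwise-in-$k$ limit above is not available.
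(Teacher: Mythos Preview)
Your argument is correct. Part~(i) is exactly the paper's approach: Lemma~\ref{lem1} plus optional stopping, mirroring the passage from Lemma~\ref{lem2} to Theorem~\ref{main1}.

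For part~(ii) you take a genuinely different route from the paper. The paper explicitly states that it will \emph{bypass} the closed form \eqref{ff}; instead it passes to the embedded jump chain $\tyn$ (removing the holding probability), shows that its transition kernel converges to that of a biased nearest-neighbor random walk $\witi Z$ on $\zz_+$ with up-probability $\lambda/(1+\lambda),$ and then repeats the weak-convergence / interchange-of-limits template from the proof of Theorem~\ref{newt}, with an adapted version of the estimate \eqref{compom}. Your approach instead evaluates \eqref{ff} directly: pointwise convergence $g_{_N}(k)/f_{_N}(k)\to\lambda^{-1}$ for each fixed $k,$ together with the uniform bound $g_{_N}(k)/f_{_N}(k)\leq\gamma<1$ from Assumption~\ref{assume1}, lets dominated convergence handle the denominator. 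This is more elementary and shorter for the Moran case, and it makes rigorous precisely the ``semi-formal approximation arguments'' the paper attributes to \cite{antalsm}. What the paper's route buys is methodological uniformity: it shows that the same machinery (convergence of transition kernels to a limiting process, plus the a~priori bounds from part~(i) to control the tail) works for both the Wright-Fisher and the Moran models, without relying on the birth--death closed form.
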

A proof of the limit in \eqref{result3} is outlined at the end of the appendix, after the following Remark.
\begin{remark}
\label{re}
The identities for the optimal values of $\alpha$ and $\gamma$ given in \eqref{alphagamma} suggest that in some cases one of the bounds
in \eqref{mmmbounds} might be asymptotically tight for large populations.
The purpose of this remark is to explore conditions for the equalities $\lambda^{-1}=\alpha$ or $\lambda^{-1}=\gamma$ to hold true.
By the definition given in \eqref{lambda}, $\lambda=\lim_{N\to\infty} \frac{f_{_N}(i)}{g_{_N}(i)}$ for any $i\in\nn.$ Moreover, for any $N\geq N_0,$
\beqn
\label{e}
\frac{f_{_N}(1)}{g_{_N}(1)}=\frac{1-w+wb}{1-w+wd+(c-d)(N-1)^{-1}}
\feqn
and
\beqn
\label{e1}
\frac{f_{_N}(N-1)}{g_{_N}(N-1)}=\frac{1-w+wa+(b-a)(N-1)^{-1}}{1-w+wc}.
\feqn
It follows from \eqref{e} that
\beq
\lambda=\frac{1-w+wb}{1-w+wd}=
\left\{
\begin{array}{cl}
\sup\limits_{N\geq N_0} \frac{f_{_N}(1)}{g_{_N}(1)}&\mbox{\rm if}~c\geq d\\
\inf\limits_{N\geq N_0} \frac{f_{_N}(1)}{g_{_N}(1)}&\mbox{\rm if}~c\leq d.
\end{array}
\right.
\feq
Furthermore, \eqref{e1} implies that
\beq
\frac{1-w+wa}{1-w+wc}=
\left\{
\begin{array}{cl}
\inf\limits_{N\geq N_0} \frac{f_{_N}(N-1)}{g_{_N}(N-1)}&\mbox{\rm if}~b\geq a\\
\sup\limits_{N\geq N_0} \frac{f_{_N}(N-1)}{g_{_N}(N-1)}&\mbox{\rm if}~b\leq a
\end{array}
\right.
\feq
and
\beq
\frac{1-w+wa+(b-a)(N_0-1)^{-1}}{1-w+wc}=
\left\{
\begin{array}{cl}
\max\limits_{N\geq N_0} \frac{f_{_N}(N-1)}{g_{_N}(N-1)}&\mbox{\rm if}~b\geq a\\
\min\limits_{N\geq N_0} \frac{f_{_N}(N-1)}{g_{_N}(N-1)}&\mbox{\rm if}~b\leq a.
\end{array}
\right.
\feq
In principle, the last three identities contain all the information which is needed to identify necessary and sufficient conditions for the occurrence
of either $\alpha=\lambda^{-1}$ or $\gamma=\lambda^{-1},$ where it is assumed, as in \eqref{alphagamma}, that the optimal bounds
$\alpha=\inf_{N\geq N_0}\min_{i\in \Omega^o_{_N}} \frac{g_{_N}(i)}{f_{_N}(i)}$ and $\gamma=\sup_{N\geq N_0}\max_{i\in \Omega^o_{_N}} \frac{g_{_N}(i)}{f_{_N}(i)}$ are employed. For instance, in the generic prisoner's dilemma case $b>d>a>c$ one can set
\beqn
\label{lg}
\gamma^{-1}=\lambda=\inf_{N\geq N_0}\min_{i\in \Omega^o_{_N}} \frac{f_{_N}(i)}{g_{_N}(i)}
\feqn
provided that $\fracd{1-w+wb}{1-w+wd} \leq \fracd{1-w+wa}{1-w+wc},$ which is equivalent to
\beq
(1-w)(c+b-a-d)\leq w(ad-bc).
\feq
This leads us to consider the following possible scenarios for a prisoner's-dilemma-type underlying game, that is
assuming that $b>d>a>c:$
\begin{enumerate}
\item $ad\geq bc$ and $c+b-a-d\leq 0$ (for instance, $b=4,d=3,a=2,c=1$). In this case \eqref{lg} holds for any $w\in (0,1].$
\item $ad>bc$ and $c+b-a-d> 0$ (for instance, $b=5,d=3,a=2,c=1$). In this case \eqref{lg} holds if and only if
\beq
\frac{1-w}{w}\leq \frac{ad-bc}{c+b-a-d}
\quad
\Leftrightarrow
\quad
w\geq \Bigl(1+\frac{ad-bc}{c+b-a-d}\Bigr)^{-1}.
\feq
\item $ad=bc$ and $c+b-a-d> 0$ (for instance, $b=6,d=3,a=2,c=1$). In this case \eqref{lg} holds only if $w=1.$
\item $ad<bc$ and $c+b-a-d\geq 0$ (for instance, $b=7,d=3,a=2,c=1$). In this case \eqref{lg} holds for no $w\in (0,1].$
\item It remains to consider the case when $ad<bc$ and $c+b-a-d< 0.$ We will now verify that this actually cannot happen.
To get a contradiction, assume that this scenario
is feasible and let $\veps=\min\{b-d,a-c\},$ $d'=d+\veps,$ $c'=c+\veps.$   Then
\beqn
\label{r1}
c'+b-a-d'=c+b-a-d< 0
\feqn
and, since $ad<bc$ and $a<b,$
\beqn
\label{r3}
ad'=ad+a\veps<bc'=bc+b\veps.
\feqn
But, due to the choice of $\veps$ we made, we should have either $b=d'$ or $a=c'.$ In the former case
\eqref{r1} and \eqref{r3} imply the combination of inequalities $c'-a< 0$ and $a<c',$ while in the latter they yield
$b-d'< 0$ and $d'<b,$ neither of which is possible.
\end{enumerate}
\end{remark}
The limit in \eqref{result3} has been computed in \cite{antalsm} (technically, in the specific case $w=1$), see in particular formula (39) there.
The proof in \cite{antalsm} relies on \eqref{ff} and involves some semi-formal approximation arguments.
We conclude this appendix with the outline of a formal proof of this result which is based on a different approach, similar to the one employed
in the proof of Theorem~\ref{newt}.
\par
Toward this end observe first that, provided that both processes have the same initial state, the fixation probabilities
of the Markov chain $\yn$ coincide with those of a Markov chain $\tyn=\bigl(\tyn_t\bigr)_{t\in\zz_+}$ on the state space $\Omega_{_N}$ with transition kernel $\tpn_{i,j}:=$ $P\bigl(\tyn_{t+1}=j\bigl|\tyn_t=i\bigr)$ which is defined as follows. Similarly to $\yn,$ the chain $\tyn$ has two absorbtion states,
$0$ and $N.$ Furthermore, for any $i\in\Omega_{_N}^o,$
\beq
\tpn_{i,j}=
\left\{
\begin{array}{cl}
\frac{\pn_{i,i+1}}{\pn_{i,i-1}+\pn_{i,i+1}}&\mbox{if}~j=i+1\\
\frac{\pn_{i,i-1}}{\pn_{i,i-1}+\pn_{i,i+1}}&\mbox{if}~j=i-1\\
0&\mbox{otherwise}.
\end{array}
\right.
\feq
The advantage of using the chain $\tyn$ over $\yn$ rests on the fact that while both $\pn_{i,i-1}$ and $\pn_{i,i+1}$ converge to zero as $N\to\infty,$
\beq
\lim_{N\to\infty} \tpn_{i,i+1}&=&\lim_{N\to\infty} \frac{\frac{N-i}{N}\xi_{_N}(i)}{\frac{N-i}{N}\xi_{_N}(i)+
\frac{i}{N}\bigl(1-\xi_{_N}(i)\bigr)}=\lim_{N\to\infty} \frac{\xi_{_N}(i)}{\xi_{_N}(i)+
\frac{i}{N}}
\\
&=&
\frac{\lambda}{1+\lambda}
\feq
and, consequently, $\lim_{N\to\infty} \tpn_{i,i-1}=\frac{1}{1+\lambda}.$ In other words, as $N\to\infty,$ the sequence of Markov chains $\tyn$ converges weakly
to the nearest-neighbor random walk (birth and death chain) $\witi Z=\bigl(\witi Z_t\bigr)_{t\in\zz_+}$ on $\zz_+$ with absorbtion state at zero and transition kernel defined at $i\in\nn$ as follows:
\beq
P\bigl(\witi Z_{t+1}=j\bigl|\witi Z_t=i)=
\left\{
\begin{array}{cl}
\frac{\lambda}{1+\lambda}&\mbox{if}~j=i+1\\
\frac{1}{1+\lambda}&\mbox{if}~j=i-1\\
0&\mbox{otherwise}.
\end{array}
\right.
\feq
Since $\lambda>1,$ then, similarly to \eqref{ztl}, we have $P\bigl(\lim_{t\to\infty} \witi Z_t=+\infty\bigr)>0.$
The rest of the proof is similar to the argument following \eqref{ztl} in the proof of Theorem~\ref{newt}, with
the processes $\tyn$ and $\witi Z$ considered instead of, respectively, $\xn$ and $Z.$ The only two exceptions are:
\\
1. $P\bigl(\lim_{t\to\infty} \witi Z_t=0\bigl|\witi Z_t=i\bigr)=\lambda^i$ for any $i\in\nn.$ This follows from the solution
to the ``infinite-horizon" variation of the standard gambler's ruin problem \cite{durrett} (take the limit as $N\to\infty$ in \eqref{ff}
assuming that $\frac{g_{_N}(k)}{f_{_N}(k)}=\lambda^{-1}$ for all $N,k\in\nn$).
\\
2.
The last four lines in \eqref{compom} should be suitably replaced. For instance, one can use the following bound:
\begin{align*}
&
P\Bigl(\tyn_1\neq 0,\ldots, \tyn_{K-1}\neq 0,\,\max_{0\leq t \leq K-1}\tyn_t<m\Bigr)
\\
&
\qquad\qquad
+\sum_{j=m}^{N-1} P\Bigl(\max_{0\leq t \leq K-1}\tyn_t=j\Bigr)\cdot \bigl(1-\widehat p_{_N}(j)\bigr)
\\
&
\quad
\leq
\Bigl[1-P\bigl(\tyn_1=m-2,\tyn_2=m-3,\ldots, \tyn_{m-1}=0\bigl|\tyn_0=m-1\bigr)\Bigr]^{\frac{K}{m-1}}
\\
&
\qquad\qquad
+
\bigl(1-\widehat p_{_N}(m)\bigr)
\\
&
\quad
\leq
\Bigl[1-\prod_{i=1}^{m-1}\frac{\pn_{i,i-1}}{\pn_{i,i-1}+\pn_{i,i+1}}\Bigr]^{\frac{K}{m-1}}+\frac{\gamma^m}{1-\gamma^N}
\\
&
\quad
=\Bigl[1-\prod_{i=1}^{m-1}\frac{g_{_N}(i)}{g_{_N}(i)+f_{_N}(i)}\Bigr]^{\frac{K}{m-1}}+\frac{\gamma^m}{1-\gamma^N}
\\
&
\quad
\leq
\Bigl[1-\Bigl(\frac{\alpha}{1+\alpha}\Bigr)^{m-1}\Bigr]^{\frac{K}{m-1}}+\frac{\gamma^m}{1-\gamma^N}.
\end{align*}
We leave the details to the reader.

\section*{Acknowledgements}

\noindent The work of T. C. was partially supported by the
Alliance for Diversity in Mathematical Sciences Postdoctoral Fellowship. O. A. thanks the Department of Mathematics at Iowa State University for its hospitality during a visit in which part of this work was carried out. A.M. would like to thank the Computational Science and Engineering Laboratory at ETH Z\"urich for the warm hospitality during a sabbatical semester.
The research of A.M. is supported in part by the National Science Foundation under Grants NSF CDS\&E-MSS 1521266 and NSF CAREER 1552903.






\end{document}